\documentclass[american,a4paper]{article}
\usepackage[T1]{fontenc}
\usepackage{float}
\usepackage{url}
\usepackage{amsmath}
\usepackage{amsfonts}
\usepackage{setspace}
%
%
%

 3
 2

%

\def\bigspace{\advance\lineskip by 3pt
      \advance\baselineskip by 3pt
      \advance\lineskiplimit by 3pt}
%

%
%
%
\global\def\sectitle#1\par{\bigbreak
  \leftline{\bf #1}
  \nobreak\medskip\vskip-\parskip
  \message{#1}
  \noindent}
\global\def\ssectitle#1\par{\bigbreak\medskip
  \leftline{\typc #1}
  \nobreak\bigskip\vskip-\parskip
  \message{#1}
  \noindent}
\global\def\sssectitle#1\par{\bigbreak\bigskip
  \leftline{\typd #1}
  \nobreak\bigskip\vskip-\parskip
  \message{#1}
  \noindent}
\global\def\sssectitletwo#1#2#3\par{\bigbreak\bigskip
  \vbox{
  \leftline{\typd #1#2}
  \nobreak\vskip2truemm
  \message{#1#2}
  \leftline{\typd \phantom{#1}#3}
  \message{#3}
  }
  \nobreak\bigskip\vskip-\parskip
  \noindent}
%
%
%
%
%




\def\mod{\mathop{\rm mod}}

\def\dim{\mathop{\rm dim}}
\def\Fix#1{\mathop{\rm Fix}(#1)}

\def\inv{^{-1}}

\def\Meng#1#2{\left\{#1\;\Big|\;#2\right\}}

\def\qed{\vbox{\hrule
  \hbox{\vrule\hbox to 5pt{\vbox to 8pt{\vfil}\hfil}\vrule}\hrule}}

\let\leftv=|
\let\rightv=|

\def\transversal{\kern0.5em\cap\kern-1em\stackrel{\textstyle\top}{}\kern0.25em}
\def\OO#1{\mathop{{\bf O}(#1)}}

\def\SO#1{\mathop{{\bf SO}(#1)}}




\def\1{{\rm 1\hskip-0.9truemm l}}


\def\ifundefined#1{\expandafter\ifx\csname
  #1\endcsname\relax}
\ifundefined{textheight}
\message{smallcaps and newline for TeX}%

\def\newline{\par\noindent}
\else
\message{this is LaTeX}
\fi


\makeatletter

\providecommand{\tabularnewline}{\\}

\newenvironment{svmultproof}{\begin{proof}}{\end{proof}}

\makeatother
\DeclareSymbolFont{rrfam}{U}{cmrr}{m}{n}
\SetSymbolFont{rrfam}{bold}{U}{cmrr}{b}{n}
\DeclareSymbolFontAlphabet\mathrrfam{rrfam}

\def\F{{\mathbb F}}

\def\T{{\mathbb T}}

\newcommand{\field}[1]{\mathbb{#1}}
\newcommand{\C}{\field{C}}
\newcommand{\R}{\field{R}}
\newcommand{\N}{\field{N}}
\newcommand{\D}{\field{D}}
\renewcommand{\S}{\field{S}}
\usepackage{babel}

\newtheorem{lemma}{Lemma} 
\newtheorem{proposition}{Proposition} 
\newtheorem{acknowledgements}{Acknowledgement} 
\newtheorem{rem}{Remark} 
\begin{document}

\title{Equivariant Bifurcation and Absolute Irreducibility in $\R^8$\\
{\normalsize A step towards the Ize Conjecture} %
\thanks{This paper is dedicated to the memory of Klaus Kirchg\"assner, whose
insights changed our understanding of nonlinear analysis and applied
mathematics. We also acknowledge the importance of the work of Jorge
Ize, whose contributions are fundamental to this research and who
passed away shortly before this work was finished. The research was
supported by DFG under LA525/11-1.%
}}

\author{Reiner Lauterbach\\Department Mathematics\\
University of Hamburg\\ Bundesstra\ss{}e 55\\ 20146 Hamburg\\
lauterbach@math.uni-hamburg.de}

\maketitle
\begin{abstract}
M. Field \cite{Fd6} refers to an unpublished work by J. Ize for a
result that loss of stability through an absolutely irreducible representation
of a compact Lie group leads to bifurcation of steady states. The
main ingredient of the proof is the hypotheses, that for an absolutely
irreducible representation of a compact Lie group there exists a closed
subgroup whose fixed point space is odd dimensional. Then, using Brouwer
degree, one gets the result. We refer to the hypotheses that for an
absolutely irreducible representation of a compact Lie group there
exists at least one subgroup with an odd dimensional fixed point space
as the algebraic Ize conjecture (AIC). Lauterbach and Matthews \cite{LM}
have shown that the (AIC) is in general not true. In fact they have
constructed three infinite families of finite subgroups of $\SO{4}$
which act absolutely irreducibly on $\R^4$ and for each of them any
isotropy subgroup has an even dimensional fixed point space. Moreover
in \cite{LM} it is shown that in spite of this failure of the (AIC)
the original conjecture is true at least for groups in two of these
three families. In this paper we show a similar bifurcation result
for the third family defined in \cite{LM}. We go on and construct
a family of groups acting absolutely irreducibly on $\R^8$ which
have only even dimensional fixed point spaces. Then we discuss the
steady state bifurcations in this case. Key ingredients are an abstract 
group theoretic construction and a kind of inductive step reducing the 
issue of bifurcations to a problem in $\R^4$.
We end this paper with a discussion
on how to extend the results in \cite{LM} to larger sets of groups
which act on $\R^{4}$ and $\R^8$. In this context we point out, that the inductive step, which is important our arguments, does not work in general and this gives rise to interesting new questions. 

\end{abstract}

\section{Introduction\label{sec:Introduction}}
In the early days of equivariant bifurcation theory various versions
of the Equivariant Branching Lemma (see for example \cite{V1,Sat1,Cic1,Sat2,IG,GSS,ChL}
for early results and some newer fomulations) have been established
and successfully applied in many different applications. 
 Basically it says that we look for the action of the group in question on the kernel and for each subgroup having a one dimensional fixed point space we find symmetry breaking solutions. So it is a natural question to ask whether there are always such subgroups. A more general question is to ask whether there are subgroups having odd dimensional fixed point spaces. This is the context of the Ize conjecture, which we will discuss in more detail in section \ref{sec:IzeConject}. Here we continue previous work by 
Lauterbach and Matthews \cite{LM}. They have used the biquaternionic
representation of elements in $\SO{4}$ to construct three families
of groups $\mathcal{F}_j=\{G_j(m)\}_{m\in (2\N+1)}$,\mbox{ for } $j=1,2,3$
of orders $16m+32$, i.e. of orders $48,80,112, \dots$, such that
the natural action on $\R^4$ is absolutely irreducible and each of
these groups has up to conjugacy precisely $j$ nontrivial isotropy
subgroups and each of them has a two dimensional fixed point subspace.
It is also shown there that for $j=1,2$ and for bifurcation problems which are equivariant with respect to these representations these fixed point subspaces contain
generically nontrivial branches of solutions. Here, generically means
the bifurcation occurs for an open and dense subset of the cubic order
bifurcation equations and these are stable under higher order perturbation.
In the language of Field \cite{Fd6,Fd-07} the bifurcations are $3$-determined
and the group actions are symmetry breaking. 

There are various ways of specifying the groups, one could either
give the biquaternionic generators, one could use the classification of Conway
and Smith \cite{CS}, or one could use the methodology of GAP \cite{GAP} to
denote the groups.  
In \cite{LM} we have used all three descriptions
and we have seen how to go from one to the other. In the present paper
the main emphasis is on group actions on $\R^8$ and therefore we
will use matrices to define the groups in question. In addition we
shall provide the GAP-names of our groups, at least for those of low order. This enables the reader to have a quick check of some of our (lengthy) computations at least for the groups of small order within our family. We define a family of groups
which relates in a rather straight forward way to the family $\mathcal{F}_3$.
In \cite{LM} we have not investigated the bifurcation behavior for
this family. In this respect We will complete the results of \cite{LM}   in  section \ref{sec:Bifurcations4} which follows a short discussion of the Ize conjecture in the next section. Then
we will give the construction of the new eight dimensional family
and we discuss the bifurcations for this new family afterwards.

Finally we remark that a minor change in the definition of the three series
in \cite{LM} gives much larger families and these new families contain
most of the (computationally known) four dimensional counter examples to the (AIC)
(compare section \ref{sec:Remarks}). 
There are many infinite series
in $\R^{8}$. It is by far not clear how to order them in a reasonable
form. Among other phenomena we find series of groups having only one
nontrivial isotropy type and the corresponding isotropy subgroup
has a four dimensional fixed point space. The dynamics in such a case
is not yet understood. We will point out some of this in the last section of this paper. 
\section{The Ize conjecture}\label{sec:IzeConject}
In classical bifurcation theory it was a question whether a loss a stability of a given branch of steady states of a nonlinear equation through an eigenvalue zero will lead to to the creation of new branches. It is known that one can easily construct examples where this is not the case. However Crandall and Rabinowitz \cite{CR1} showed that if the eigenvalue is simple, then there is a steady state bifurcation in the sense that there is a new branch of solutions bifurcating from the given one. \\ 
In the context of equivariant bifurcation theory there is a similar question, i.e. the question what is the correct generalization of a simple eigenvalue zero.    It seems natural to require minimal degeneracy and this is to require an absolutely irreducible representation of the group on the kernel at the corresponding point. So it is a natural conjecture, that absolutely irreducible group actions on the kernel lead to bifurcation of branches of relative equilibria which are in a natural way  the objects corresponding to steady states in the non-equivariant context. This version of the conjecture has been around for some time. It was mentioned to the author by Marty Golubitsky in 2005. In his 1996 notes M. Field \cite{Fd6} attributes a similar conjecture to Jorge Ize, we quote a footnote from \cite{Fd6}, p.63 : "`It follows from recent work of Ize [50], that every absolutely irreducible representation has an odd dimensional fixed point space and so, using results of section 4, has a generically symmetry breaking isotropy type."'. We refer to this statement on symmetry breaking as Ize conjecture (IC).  This quote also provides a possible proof: if $G$ is a compact Lie group acting absolutely irreducibly on a finite dimensional real space, then there exists a subgroup $H$ whose fixed point space is odd dimensional. This last statement is sufficient to prove the Ize conjecture, however it is by no means necessary. It is a purely algebraic statement, so we refer to this statement as the algebraic Ize conjecture (AIC). There is is some evidence that the (AIC) is true in dimensions of the form $2\mod 4$. First it is obviously true in dimension $2$, Ruan \cite{Ruan10} proves the validity in dimension 6 under mild additional assumptions. Our GAP computations have not found counterexamples to the (AIC) in dimensions $6,\ 10,\ 14$ and $18$.   
\section{Bifurcations for the  family $\mathcal{F}_3$ in $\R^4$\label{sec:Bifurcations4}}
\subsection{The Groups in the third family $\mathcal{F}_3$  \label{sub:Groups-third-series}}
We employ the methods from \cite{LM} to discuss the bifurcation behavior.
We use the standard basis $i^{2}=j^{2}=k^{2}=-1$ for the quaternions
and write $e_{m}$ for one of the $m$-th primitive roots of $-1$ in $\C$. With
this notation we recall the definition of the group 
\begin{equation}\label{equ:G3_generation}
G_{3}(m)=\langle[e_{m},1],[1,i],[j,1],[1,j]\rangle,\ m\in \N, m \mbox { odd }
\end{equation}
using pairs of quaternions of length $1$, where we identify $[a,b]$ and $[-a,-b]$.
Each such pair represents an element in $\SO{4}$, see \cite{CS}.  
Observe the action of the pair $[a,b]$ on $v=v_1+v_2i+v_3j+v_4k$ is given by
\[[a,b]v\mapsto \bar a v b.\]
We recall Theorem
2.1 from \cite{LM}. In the context of our family of groups, it says
that for each $m\in 2\N+1$ the group $G_{3}(m)$ has the order $16m$,
if $m'$ divides $m$ then $G_{3}(m')\subset G_{3}(m)$, the closure
of the union of all these groups is a compact, $1$-dimensional Lie
group $G_3$.  Theorem 2.2 of \cite{LM}
tells us that the action of $G_{3}(m)$, for an odd integer $3\le m\in\N$,
is absolutely irreducible, the same applies to $G_{3}$. For each
group  $G_{3}(m)$ and for $G_{3}$ there are (up to
conjugation) precisely three nontrivial isotropy subgroups, each of
them has a two dimensional fixed point space. These groups do not depend on $m$ (however the number of conjugates is unbounded as $m\to\infty$). 
In biquaternionic notation these groups are 
\[H_1=\langle[j,i]\rangle,\ H_2=\langle[j,j]\rangle,\ H_3=\langle[j,k]\rangle.\]
In each case the fixed point space is two dimensional. For the fixed point subspace of $H_j$, $j\in\{1,2,3\}$ we find
\begin{enumerate}
\item $H_1=\langle[j,i]\rangle$
\[[j,i](v_1+v_2i+v_3j+v_4k)=v_4+v_3i+v_2j+v_1k,\]
and therefore
\[\Fix{H_1}=\Meng{v\in\R^4}{v_1=v_4\mbox{ and }v_2=v_3},\]
\item $H_2=\langle[j,j]\rangle$
\[[j,j](v_1+v_2i+v_3j+v_4k)=v_1-v_2i+v_3j-v_4k,\]
therefore the fixed point space is given by 
\[\Fix{H_2}=\Meng{v\in\R^4}{v_2=v_4=0},\] 
\item $H_3=\langle[j,k]\rangle$
\[[j,k](v_1+v_2i+v_3j+v_4k)=-v_2-v_1i+v_4j+v_3k,\]
and therefore its fixed point space is given by
\[\Fix{H_3}=\Meng{v\in\R^4}{v_1=-v_2\mbox{ and }v_3=v_4 }\]
\end{enumerate}
\subsection{The equivariant structure\label{sub:equivariant-structure}}
The next step is to consider the structure of the cubic $G_3$ (or $G_3(m)$)-equivariant
maps $\R^{4}\to\R^{4}$. In Theorem 4.1 in \cite{LM} it is shown
that the space of cubic equivariant maps is three dimensional. For
the determinacy and bifurcation results we only need the non-radial
equivariant maps (see the work by Field \cite{Fd6,Fd-07}). A basis for the space of non-radial
cubic equivariant maps for the groups $G_{3}(m)$  and the group  $G_{3}$
consists of gradients of invariant quartic polynomials and these have
the form 
\[
I_{4,1}(z_{1},z_{2})=\frac12|z_{1}|^{2}|z_{2}|^{2},\qquad I_{4,2}(z_{1,}z_{2})=\frac12\left(z_{1}^{2}{\bar{z}}_{2}^{2}+\mbox{\ensuremath{\bar{z}}}_{1}^{2}z_{2}^{2}\right),
\]
where $z_{\mu}=x_{\mu}+iy_{\mu}$ for $\mu=1,2$ and $\R^{4}\ni v=z_{1}+z_{2}j$.
For $v\in\R^{4}$ we get the (real) gradients in Equ. (\ref{eq:real gradients}),
where $\rho_{1}=v_{1}^{2}+v_{2}^{2}$, $\rho_{2}=v_{3}^{2}+v_{4}^{2}$,
$\sigma_{1}=v_{1}^{2}-v_{2}^{2}$,$\;\sigma_{2}=v_{3}^{2}-v_{4}^{2}$
and $\tau_{1}=v_{1}v_{2},\;\tau_{2}=v_{3}v_{4}$ after rewriting $I_{4,\nu},\ $$\nu=1,2$
as
\[
I_{4,1}(v)=\frac12\rho_{1}\rho_{2},\:\ I_{4,2}(v)=\frac12(\sigma_{1}\sigma_{2}+4\tau_{1}\tau_{2}).
\]
Then taking the gradients we obtain (up to scalar multiples)
the equivariant maps
\begin{equation}
e_{3,1}(v)=\left(\begin{array}{c}
\rho_{2}v_{1}\\
\rho_{2}v_{2}\\
\rho_{1}v_{3}\\
\rho_{1}v_{4}
\end{array}\right),\quad e_{3,2}(v)=\left(\begin{array}{c}
\sigma_{2}v_{1}+2v_{2}\tau_{2}\\
-\sigma_{2}v_{2}+2v_{1}\tau_{2}\\
\sigma_{1}v_{3}+2\tau_{1}v_{4}\\
-\sigma_{1}v_{4}+2\tau_{1}v_{3}
\end{array}\right).\label{eq:real gradients}
\end{equation}
\subsection{Phase vectorfield and bifurcation\label{sub:Phase-vectorfield}}
In order to compute the phase vector field (see \cite{Fd6,Fd-07})
we restrict these  equivariants to the unit sphere in $\R^{4}$, compute the scalar
product with the unit radial vector and project $e_{3,\mu}$ for $\mu=1,2$
onto the tangent bundle to the sphere by subtracting the radial part:
\[
t_{3,\mu}(v)=e_{3,\mu}(v)-\langle e_{3,\mu}(v),\ v\rangle\ v,\:\ \ \mu=1,2.
\]
We obtain the two tangent fields
\[
t_{3,1}(v)=\left(\begin{array}{c}
\rho_2(\rho_2-\rho_1)v_{1}\\
\rho_2(\rho_2-\rho_1)v_{2}\\
\rho_1(\rho_1-\rho_2)v_{3}\\
\rho_1(\rho_1-\rho_2)v_{4}
\end{array}\right)\] 
\[t_{3,2}(v)=\left(\begin{array}{c}
\sigma_2(1-2\sigma_1)v_{1}+2\tau_{2}(v_2-4\tau_{1}v_1) \\
\sigma_2(-1-2\sigma_1) v_{2}+2\tau_{2}(v_1-4\tau_{1}v_2)\\
\sigma_1(1-2\sigma_2)v_{3}+2\tau_{1}(v_4-4\tau_{2} v_{3})\\
\sigma_1(-1-2\sigma_2)v_{4}+2\tau_{1}(v_3-4\tau_{2} v_{4})
\end{array}\right).
\]
We solve the equation $a\, t_{3,1}(v)+b\, t_{3,2}(v)=0$ for $a,b\in\R$. We want
to show that generically in $a,b\in\R$ the solutions are isolated
and (normally) hyperbolic. Without loss of generality we assume $b=1$.
In order to solve the remaining equation we restrict to one of the fixed point
subspaces. It suffices to show hyperbolicity of the solutions
on the sphere in the fixed point subspace, since we use a slightly more  generalized
version of the results in \cite{Fd6} to apply in fixed point subspaces. The fixed point spaces are two-dimensional, the intersection with the unit sphere gives a $S^1$. 
So we have  a map of the form 
\[S^1\to TS^1:\phi\mapsto f(x(\phi),y(\phi))\left( \begin{array}{c}
    y(\phi)\\-x(\phi)
  \end{array}\right),\ \phi\in S^1, \left(
  \begin{array}{c}
    x\\y
  \end{array}\right):S^1\to\R^2
\]
and zeros are regular if $f(x(\phi),y(\phi))=0$ implies, that $Df(x(\phi),y(\phi))(x(\phi),y(\phi))^T\not=0$. In all cases we reduce the equivariant map to this form. \\[3mm] 
The fixed point space for $H_1$ is given by $\Fix{H_1}=\Meng{v\in\R^4}{v_1=v_4\mbox{ and }v_2=v_3}$.
This means for $v\in\Fix{H_1}$ we have $\rho_1=\rho_2$, $\sigma_1=-\sigma_2$ and $\tau_1=\tau_2$. Therefore $t_{3,1}(v)=0$ on this space.
For the mapping $t_{3,2}$ restricted to $\Fix{H_1}$ it is easily seen, that the first and the fourth equation and the second and third equation are the same, so we solve the first  and third equations and identify the variables:
\begin{eqnarray*}
 0&=& -\sigma_1(1-2\sigma_1)v_1+2\tau_1(v_2-4\tau_1v_1)\\
 0&=& \sigma_1( 1-2\sigma_2)v_1+2\tau_1(v_1-4\tau_2v_2).
\end{eqnarray*}
After a bit of  computation this reduces to
\[8\left(
  \begin{array}{c}
    v_1v_2^4-v_1^3v_2^2\\
   v_1^4v_2-v_1^2v_2^3
  \end{array}\right)=8(v_1v_2^3-v_1^3v_2)\left(
    \begin{array}{c}
      v_2\\-v_1
    \end{array}\right)\]
The zeros on the circle are given by $v_1=0, v_2=\pm\frac12$, $v_1=\pm\frac12,\ v_2=0$ and $v_1=\pm v_2$. These solutions are obviously regular. 
\\[3mm] 
We have seen $\Fix{H_2}=\Meng{v\in\R^4}{v_2=v_4=0}$.
This implies $\rho_{1}=\sigma_{1}=v_{1}^{2}$, $\rho_{2}=\sigma_{2}=v_{3}^{2}$
and $\tau_{1}=\tau_{2}=0$. So the second and last equation are identically
satisfied and the first and third equation take the form (using $1-\rho_{j}=\rho_{3-j},\ j=1,2$)
\[
a\, v_{3}^{2}(v_{3}^{2}-v_{1}^{2})v_{1}+v_{3}^{2}\left(v_{3}^{2}-v_{1}^{2}\right)v_{1}=0
\]

\[
a\, v_{1}^{2}(v_{1}^{2}-v_{3}^{2})v_{3}+v_{1}^{2}(v_{3}^{2}-v_{3}^{2})v_{3}=0.
\]
Observe that from the assumption that $\|v\|=1$ we cannot have that
$v_{1}=v_{3}=0$. A simplification yields 
\[
(a+1)v_{3}^{2}(v_{3}^{2}-v_{1}^{2})v_{1}=0
\]
\[
(a+1)v_{1}^{2}(v_{1}^{2}-v_{3}^{2})v_{3}=0,
\]
or 
\[(a+1)v_1v_3(v_3^2-v_1^2)\left(
  \begin{array}{c}
    v_3\\-v_1
  \end{array}\right)=0\]

We obtain zeros at $v_{1}=0$, $v_{3}=0,$ and for $v_{1}^{2}=v_{3}^{2}$.
Each zero is regular on $v_{1}^{2}+v_{3}^{2}=1$. \\[3mm]
In $\Fix{H_3}=\Meng{v\in\R^4}{v_1=-v_2\mbox{ and }v_3=v_4}$ we have
\[\rho_1=2v_1^2,\ \rho_2=2v_3^2,\ \sigma_{1,2}=0,\tau_1=-v_1^2,\ \tau_2=v_3^2.\]
So we rewrite the fields $t_{3,1}, t_{3,2}$ as
\[t_{3,1}(v)=\left(
  \begin{array}{c}
    v_3^2(v_3^2-v_1^2)v_1\\
    -v_3^2(v_3^2-v_1^2)v_1\\
    v_1^2(v_1^2-v_3^2)v_3\\
 v_1^2(v_1^2-v_3^2)v_3\\
  \end{array}\right)\]
\[t_{3,2}(v)=\left(
  \begin{array}{c}
   -4v_3^2(v_1^2+v_3^2)v_1+8v_1^2v_3^2v_1\\
    4v_3^2(v_1^2+v_3^2)v_1-8v_1^2v_3^2v_1\\
   -4v_1^2(v_1^2+v_3^2)v_3+8v_1^2v_3^2v_3\\
    4v_1^2(v_1^2+v_3^2)v_3-8v_1^2v_3^2v_3\\
\end{array}
\right)\]
We have to look at the linear combination and there at the first and third equation to get 
\[(a+4)v_1v_3(v_1^2-v_3^2)\left(
  \begin{array}{c}
    v_3\\
    -v_1
  \end{array}\right).\]
Again we find four isolated solutions.\\[3mm]
By Theorem 5.2.1
in \cite{Fd6} we have the following result.
\begin{theorem}{Theorem A}
The four dimensional irreducible representation of the groups in the
family $\mathcal{F}_3$ including $G$, is symmetry breaking in the sense of {\rm \cite{Fd6}}.
All three isotropy types $[H_1],[H_2],[H_3]$ are symmetry breaking. The corresponding solution
branches contain steady states.
\end{theorem}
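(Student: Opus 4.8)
The plan is to deduce the statement from Field's determinacy and symmetry-breaking criterion (Theorem 5.2.1 of \cite{Fd6}), feeding it the explicit phase vector field computed above. By Theorem 4.1 of \cite{LM} the space of non-radial cubic equivariants is two-dimensional and spanned by $e_{3,1},e_{3,2}$; hence, up to a radial term that only reparametrizes the bifurcation parameter and up to an overall scale, the cubic truncation of any $G_3(m)$- (or $G_3$-) equivariant bifurcation problem has phase vector field $a\,t_{3,1}(v)+b\,t_{3,2}(v)$ on the unit sphere $S^3\subset\R^4$, with $(a,b)\in\R^2$. Field's framework translates the existence of symmetry-breaking steady-state branches into the existence of hyperbolic zeros of this phase field, and yields $3$-determinacy (persistence under higher order terms) once all the relevant zeros are hyperbolic. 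So the task reduces to producing, for each isotropy type $[H_1],[H_2],[H_3]$ and for generic $(a,b)$, a hyperbolic zero of the phase field inside the corresponding fixed-point circle $\Fix{H_j}\cap S^3$.

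First I would normalize $b=1$ and treat the three types separately. On each $\Fix{H_j}$ the defining linear relations collapse the quantities $\rho_\mu,\sigma_\mu,\tau_\mu$ so that the four components of $a\,t_{3,1}+t_{3,2}$ reduce to a single scalar multiple $f_j$ of the tangent field $(y,-x)^T$ to the circle $\Fix{H_j}\cap S^3$; this is precisely the normal form recorded before the theorem. For $H_1$ the field $t_{3,1}$ vanishes on $\Fix{H_1}$, so $f_1$ is independent of $a$; for $H_2$ and $H_3$ the scalar acquires the factors $(a+1)$ and $(a+4)$ respectively. I would then read off the finitely many zeros of $f_j$ on the circle and verify the regularity condition stated above, namely that the derivative of $f_j$ along the circle is nonzero at each zero; this holds for every $a$ in the $H_1$-case and for $a\ne-1$, respectively $a\ne-4$, in the $H_2$- and $H_3$-cases. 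Thus, for the open dense parameter set with $a\notin\{-1,-4\}$, each fixed-point circle carries only isolated, regular zeros.

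Second I would assemble these into the global conclusion. The spheres $\Fix{H_j}\cap S^3$ are pairwise disjoint and, since by Theorem 2.2 of \cite{LM} the only nontrivial isotropy subgroups are $H_1,H_2,H_3$, every point of $\Fix{H_j}\cap S^3$ has isotropy exactly $H_j$; hence a regular zero there is a genuine symmetry-breaking branch of type $[H_j]$, and Field's theorem certifies all three types as symmetry breaking and the truncation as $3$-determined, so the branches persist for the full problem. That the branches consist of steady states, rather than drifting relative equilibria, follows from the gradient structure: $e_{3,1}$ and $e_{3,2}$ are, up to scale, the gradients of the $G_3$-invariant quartics $I_{4,1},I_{4,2}$, so the truncated field is a gradient, and a relative equilibrium $v(t)=\exp(t\xi)v_0$ with nontrivial drift would keep the invariant potential constant along a non-stationary orbit, forcing $\nabla V\equiv0$ there, which is incompatible with strict decrease of $V$ off equilibria. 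Hence the phase-field zeros lift to branches of steady states.

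The hard part will be justifying that regularity of the \emph{reduced one-dimensional} field inside $\Fix{H_j}\cap S^3$ already yields \emph{normal} hyperbolicity of the zero for the full phase field on $S^3$: the eigenvalues in directions transverse to $\Fix{H_j}$ are invisible to the scalar reduction and must be controlled separately. I would handle this exactly as indicated in the text, by invoking the slightly generalized form of Field's result that operates within fixed-point subspaces, where the transverse block is governed by the action of the normalizer quotient $N(H_j)/H_j$ and generically contributes nonzero normal eigenvalues. A secondary point, which however comes for free, is uniformity across the family: the exceptional set $\{-1,-4\}$ does not depend on $m$, so a single open dense parameter set works simultaneously for every $G_3(m)$ and, by passing to the closure, for the limiting Lie group $G_3$.
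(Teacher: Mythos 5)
Your proposal is correct and follows essentially the same route as the paper: the same reduction to the phase vector field $a\,t_{3,1}+b\,t_{3,2}$ with $b=1$, the same restriction to the three fixed-point circles yielding the same scalar reductions (the $a$-independent field on the $H_1$-circle and the factors $(a+1)$ and $(a+4)$ for $H_2$ and $H_3$), the same regularity check of the isolated zeros, and the same appeal to Theorem 5.2.1 of \cite{Fd6} together with its generalization to fixed-point subspaces. Your supplementary observations --- the gradient structure ruling out drifting relative equilibria, and the $m$-independence of the exceptional set $\{-1,-4\}$ --- go slightly beyond what the paper makes explicit, but the core argument is identical.
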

\begin{rem}
  The cubic equivariant maps are equivariant with respect to the Lie group $G_3$. Therefore the zero set of these equivariant cubics contain (topological) circles. For the groups $G_3(m)$ there are higher order terms which break this $G_3$-symmetry and should lead to isolated zeros. If this were the case, methods developed by Field \cite{Fd6,Fd-07} could lead to more precise bifurcation results especially to establish the existence of branches in trivial isotropy. We have made no attempt to determine the higher order equivariants which break the continuous symmetry. 
\end{rem}
\section{Construction of groups \label{sec:Construction-of-groups}}
In this section we describe the construction of a family
of groups acting absolutely irreducibly in dimension eight which show that the (AIC) fails in this dimension. 
The simplest way to define these groups
is to construct some $2\times2$-matrices, use them to built $4\times4$-matrices
and finally we use these to form several $8\times8$-matrices. These
will be the generators of the groups in question. 
For $k\in\N$ let $\zeta_{k}\in\C$ be one of the primitive $k$-th
roots of unity. We set
\[
d_{1}(k)=\frac{1}{2}\left(\zeta_{k}+\zeta_{k}^{k-1}\right),\: d_{2}(k)=\frac{1}{2i}\left(\zeta_{k}-\zeta_{k}^{k-1}\right),\: f=\frac{1}{2}\left(\zeta_{8}-\zeta_{8}^{3}\right).
\]
We define the following $2\times2$ matrices (for $k=4+8\ell$, $\ell=1,2,3,\dots$)
\[
F_{1}=\left(\begin{array}{cc}
-f & -f\\
-f & f
\end{array}\right),\, F_{2}=\left(\begin{array}{cc}
-f & f\\
f & f
\end{array}\right),\,\1_{2}=\left(\begin{array}{cc}
1 & 0\\
0 & 1
\end{array}\right),\] \[D_k=\left(\begin{array}{cc}
-d_{2}(k) & -d_{1}(k)\\
d_{1}(k) & -d_{2}(k)
\end{array}\right),\, 0_{2}=\left(\begin{array}{cc}
0 & 0\\
0 & 0
\end{array}\right).
\]
\[
T_{1}=\left(\begin{array}{cc}
-1 & 0\\
0 & 1
\end{array}\right),\ T_{2}=\left(\begin{array}{cc}
1 & 0\\
0 & -1
\end{array}\right),\ S_{1}=\left(\begin{array}{cc}
0 & -1\\
-1 & 0
\end{array}\right),\ S_{2}=\left(\begin{array}{cc}
0 & 1\\
1 & 0
\end{array}\right).
\]
For later use, we note that $T_\mu$, $S_\mu$, $F_\mu$ for $\mu=1,2$ are reflections and $D_k$ is a rotation in $\R^2$. \\
Now we use these to define some $4\times4$-matrices:
\[
\F=\left(\begin{array}{cc}
0_{2} & F_{1}\\
F_{2} & 0_{2}
\end{array}\right),\;\S=\left(\begin{array}{cc}
S_{1} & 0_{2}\\
0_{2} & S_{2}
\end{array}\right),\;\T=\left(\begin{array}{cc}
0_{2} & T_{1}\\
T_{2} & 0_{2}
\end{array}\right),\;\D_k=\left(\begin{array}{cc}
D_k & 0_{2}\\
0_{2} & D_k
\end{array}\right),\;0_4=\left(\begin{array}{cc}
0_2 & 0_{2}\\
0_{2} & 0_2
\end{array}\right).
\]
From this we get the following $8\times8$-matrices
\[
R_{1}=\left(\begin{array}{cc}
0_{4} & \F\\
\F & 0_{4}
\end{array}\right),\; R_{2}=\left(\begin{array}{cc}
\S & 0_{4}\\
0_{4} & \T
\end{array}\right)\:\mbox{{and}}\; R_{3}(k)=\left(\begin{array}{cc}
\D_k & 0_{4}\\
0_{4} & \D_k
\end{array}\right).
\]
~\\[3mm]
{\bf Definition 1}\\
For $\ell\in\N$ we set $k=4+8\ell$ and define
\[
G(\ell)=\langle R_{1},R_{2},R_{3}(k)\rangle.
\]
~\\[3mm]
\begin{theorem}{Theorem B}
For $\ell\in\N$ we have
\begin{enumerate}
\item $G(\ell)$ is a group of order $16k=64+128\ell$, i.e. the orders
are $192,320,448,\dots$.
\item $G(\ell)$ acts absolutely irreducibly on $\R^{8}$.
\item If $k_j=4+8l_j$, $j=1,2$ and if $k_{1}|k_{2}$ then $G(\ell_{1})\subset G(\ell_{2})$ and hence
\[
G=\overline{\bigcup_{\ell\in\N}G(\ell)}
\]
is a compact Lie group acting absolutely irreducibly on $\R^{8}$. 
\item If $H<G(\ell)$ or $H<G$ is an isotropy subgroup, then $\dim\Fix{H}$
is even. \end{enumerate}
\end{theorem}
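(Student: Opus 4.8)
The plan is to read everything off the block decomposition $\R^8=\R^2\oplus\R^2\oplus\R^2\oplus\R^2$ into four planes $V_1,\dots,V_4$ (coordinates $1$--$2$, $3$--$4$, $5$--$6$, $7$--$8$), each identified with $\C$. The generators act transparently in these terms: $R_3(k)$ applies the same planar rotation $D_k$ to every $V_\mu$, while $R_1$ and $R_2$ each permute the four planes and apply one of the reflections $F_\mu,S_\mu,T_\mu$ in each. Reading off the induced permutations gives a homomorphism $p\colon G(\ell)\to S_4$ with $p(R_1)=(14)(23)$, $p(R_2)=(34)$ and $p(R_3(k))=\mathrm{id}$; these generate the dihedral group $D_4$ of order $8$, acting transitively on $\{1,2,3,4\}$. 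Write $N=\ker p$ for the block-diagonal part. Statement (2) then falls out quickly: commuting with $R_3(k)$, which is the same rotation $D_k\neq\pm I$ (here $k\ge 12$) in each plane, forces any equivariant endomorphism $M$ of $\R^8$ to be $\C$-linear on every plane, i.e.\ $M\in M_4(\C)$ acting on $\C^4$; since the reflections make $R_1,R_2$ act conjugate-linearly on each plane, commuting with both of them forces the complex scalars in $M$ to be real and, using transitivity of $p(G(\ell))$ on the planes, forces $M$ to be scalar. Hence $\mathrm{End}_{G(\ell)}(\R^8)=\R$, which is exactly absolute irreducibility.

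For (1) I would compute $|G(\ell)|=|N|\cdot|p(G(\ell))|=8|N|$. Squaring generators yields block-diagonal rotations, recorded plane by plane as $R_1^2=(\tfrac\pi2,-\tfrac\pi2,\tfrac\pi2,-\tfrac\pi2)$, $R_2^2=(0,0,\pi,\pi)$ and $R_3(k)=(\beta,\beta,\beta,\beta)$ with $\beta=\tfrac{2\pi}{k}+\tfrac\pi2$. The two points to verify are that $N$ is the abelian group generated by these three (it is the normal closure of $R_1^2,R_2^2,R_3(k)$, and conjugation by $R_1$ or $R_2$ merely inverts each of them), and that $N\subseteq\SO{2}^4$ contains no block-diagonal reflection, which I would settle by a determinant/parity argument on words in the generators. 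Then $|N|$ is the order of the subgroup of the torus $(\R/2\pi\Z)^4$ spanned by the three angle-vectors above, and a short index computation gives $|N|=2k$, whence $|G(\ell)|=16k$.

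Statement (3) is comparatively soft. Only $R_3$ depends on $k$, so it suffices to show $R_3(k_1)\in G(\ell_2)$ when $k_1\mid k_2$: the $(k_2/k_1)$-th power of $R_3(k_2)$ equals $(\beta_1+m\pi,\dots,\beta_1+m\pi)$, and since $k_1\equiv k_2\equiv 4\ (\mathrm{mod}\ 8)$ the ratio $k_2/k_1$ is odd, so the leftover $m\pi$ is removed using $-I=R_1^4\in G(\ell_2)$. The union over the divisibility-directed family of the finite groups $G(\ell)\subset\SO{8}$ then has compact closure $G$, a closed subgroup of $\SO{8}$ and hence a Lie group; along a divisibility chain the orders of $R_3(k)$ tend to $\infty$, so the closure of these diagonal rotations is the full circle $\{(\phi,\phi,\phi,\phi)\}$ and $G$ is one-dimensional. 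Absolute irreducibility of $G$ is immediate from $\mathrm{End}_G(\R^8)\subseteq\mathrm{End}_{G(\ell)}(\R^8)=\R$.

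The real work is (4), which I expect to be the main obstacle. The mechanism I would use is the complex structure $J_0=(\tfrac\pi2,\tfrac\pi2,\tfrac\pi2,\tfrac\pi2)\in N$, with $J_0^2=-I$. Conjugation by any $g\in G$ sends $J_0$ to $J_0^{\pm1}$ (it is fixed by $N$ and inverted by $R_1,R_2$), giving a homomorphism $\varepsilon\colon G\to\{\pm1\}$ whose kernel is the centralizer $Z_G(J_0)$ of index $2$. If an isotropy subgroup $H$ lies in $\ker\varepsilon$, then $J_0$ preserves $\Fix{H}$ and restricts there to a complex structure, so $\dim\Fix{H}$ is even and we are done. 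The hard case is $H\not\subseteq\ker\varepsilon$, i.e.\ $H$ contains an element conjugate-linear with respect to $J_0$; then $\Fix{H}$ is a real form and evenness is not automatic. This is exactly where I would invoke the inductive step: the index-two subgroup of $G$ preserving the splitting $\R^8=\R^4\oplus\R^4$ (with $R_1$ interchanging the two summands) acts on each $\R^4$ factor through a four-dimensional representation to which the analysis of Section~\ref{sec:Bifurcations4} applies, so its fixed subspaces are two-dimensional; transporting this back through the splitting and the $R_1$-swap should force $\Fix{H}$ to be even-dimensional in the remaining case as well. Classifying these conjugate-linear isotropy subgroups and checking that the reduction to $\R^4$ genuinely applies to each of them is the delicate part, and the place where the construction was engineered to work.
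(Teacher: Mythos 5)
Your parts (2) and (3) are essentially right, and your route to (2) is genuinely different from the paper's: the paper never looks at the commutant in $M_4(\C)$, but instead splits $\R^8=\Fix{H}\oplus\Fix{H'}$ for $H=\langle R_2^2\rangle$, $H'=\langle -R_2^2\rangle$, identifies the Weyl groups with $G_3(\tau)$ acting absolutely irreducibly on $\R^4$ (Lemma \ref{lem:normalizer_matrices}, i.e.\ the ``inductive step''), and then uses $R_1$ to equate the two scalars. Your commutant argument avoids the four-dimensional input entirely, which is attractive, but as written it has a hole: transitivity of the plane permutation does \emph{not} force $M$ to be scalar (any transitive permutation representation has the all-ones matrix in its commutant). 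What saves you is that the block-diagonal kernel $N$ acts on the four planes by four \emph{pairwise distinct} characters which are permuted transitively, with trivial stabilizer, by the $4$-cycle $p(R_1R_2)$; Clifford--Mackey theory then makes the complex representation of the index-two ``$\C$-linear'' subgroup irreducible, so its commutant is $\C$, and conjugate-linearity of $R_1$ cuts this to $\R$. With that step inserted, your (2) is correct and arguably cleaner than the paper's.

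The genuine gaps are in (1) and (4). For (1), the assertion $N=\langle R_1^2,R_2^2,R_3\rangle$ is exactly what has to be proved, and your two proposed verifications do not deliver it. First, ``conjugation by $R_1$ or $R_2$ merely inverts each of them'' is false: $R_2R_1^2R_2^{-1}$ is the rotation vector $(-\tfrac\pi2,\tfrac\pi2,\tfrac\pi2,-\tfrac\pi2)=R_2^2R_1^{-2}$, not $R_1^{-2}$ (it does stay inside the subgroup, so normality survives, but the claim as stated is wrong). Second, and more seriously, the inclusion of the normal closure of $\{R_1^2,R_2^2,R_3\}$ in $\ker p$ is automatic, while the \emph{equality} you need is not: the quotient by the normal closure is a dihedral group generated by two involutions that surjects onto $D_4$, and to conclude its order is $8$ rather than $16$ or $\infty$ you must show, e.g., $(R_1R_2)^4\in\langle R_1^2,R_2^2,R_3\rangle$. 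Your determinant/parity argument only gives $N\subset\SO{2}^4$, an infinite group, so it bounds nothing. This missing word computation is precisely the content the paper's Theorem C (the Abstract Generation Theorem, with its reduction tables) was built to supply; you have deferred, not replaced, it.

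For (4) you prove only the easy half. (Also, $J_0\notin N$: a short computation with the angle vectors $(\tfrac\pi2,-\tfrac\pi2,\tfrac\pi2,-\tfrac\pi2)$, $(0,0,\pi,\pi)$, $(\beta,\beta,\beta,\beta)$ shows no element of $G(\ell)$ rotates all four planes by $\tfrac\pi2$ simultaneously; $J_0$ lies only in the closure $G$. This is harmless, since your $\varepsilon$ only needs $G(\ell)$ to normalize $\langle J_0\rangle$.) The case $H\not\subset\ker\varepsilon$, which you yourself flag as the delicate part, is the actual content of the theorem and is where the paper works hardest: it needs (i) the identification of the Weyl groups of $H,H'$ with $G_3(\tau)$ together with the fact from \cite{LM} that every isotropy subgroup of $G_3(\tau)$ on $\R^4$ has even-dimensional fixed space; (ii) a case analysis according to whether the isotropy group contains $H$, $H'$, both, or neither; and (iii) for elements $V$ outside the splitting-preserving subgroup, the block-antidiagonal form and the graph argument $\Fix{V}=\Meng{(q,Vq)}{q\in\Fix{V^{2}}\cap\Fix{H}}$ to get evenness. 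Note also that your dichotomy is not the paper's: $\ker\varepsilon$ (rotation-block elements) and the splitting-preserving subgroup $N_{G(\ell)}(H)$ are different index-two subgroups ($R_2$ lies in the latter but not the former), so ``transporting back through the splitting'' requires genuine reworking rather than bookkeeping. As it stands, (4) --- and with it the theorem --- is not proved in your write-up.
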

\begin{rem}
 $G$ is a compact, $1$-dimensional Lie group which has $8$ components. This can be seen from the structure of $G(\ell)$ when $\ell\to\infty$, then we have always eight cosets of the cyclic group generated by $R_3(k)$. The infinitessimal generator of the component of the identity in $G$ is given by a $2\times 2$-block diagonal matrix with block sof the form $\left(
   \begin{array}{cc}
     0&-1\\
     1&0
   \end{array}\right)$. 
\end{rem}
In order to prove this result we need a couple of lemmas which are needed to verify the hypotheses of the Abstract Generation Theorem (Theorem C). This theorem is the major technical part of the proof. We will state this theorem, which might be of  some  interest  by itself, in the next section and provide the proof of it, which is independent of all other considerations in this paper, in section \ref{sec:pf_thm_C}.
\begin{lemma}\label{lem:rel1}
$R_1^8=\1_8$
\end{lemma}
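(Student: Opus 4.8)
The plan is to reduce the $8\times8$ relation $R_1^8=\1_8$ to a single $2\times2$ computation by peeling off the two nested layers of block anti-diagonal structure built into $R_1$. Note first that $R_1$ does not depend on $k$ (only the blocks $D_k$ and $\D_k$ do), which is consistent with the lemma being stated without reference to $\ell$. Since $R_1$ is block anti-diagonal with both off-diagonal blocks equal to $\F$, squaring gives $R_1^2=\mathrm{diag}(\F^2,\F^2)$, and hence $R_1^8=\mathrm{diag}(\F^8,\F^8)$. Thus it suffices to prove $\F^8=\1_4$.

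Applying the identical observation one level down, $\F$ is itself block anti-diagonal with off-diagonal blocks $F_1$ and $F_2$, so $\F^2=\mathrm{diag}(F_1F_2,\,F_2F_1)$ and therefore $\F^8=\mathrm{diag}\bigl((F_1F_2)^4,\,(F_2F_1)^4\bigr)$. The whole claim thus comes down to showing that both $F_1F_2$ and $F_2F_1$ have order dividing $4$ in $\mathrm{O}(2)$.

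The one quantitative input needed is the value of $f$. Evaluating $f=\tfrac12(\zeta_8-\zeta_8^3)$ with $\zeta_8=e^{i\pi/4}$ gives $\zeta_8-\zeta_8^3=\sqrt2$, so $f=1/\sqrt2$ and, crucially, $2f^2=1$. Using this, a direct $2\times2$ multiplication yields
\[
F_1F_2=\begin{pmatrix}0&-1\\1&0\end{pmatrix},\qquad
F_2F_1=\begin{pmatrix}0&1\\-1&0\end{pmatrix},
\]
that is, the quarter-turn rotation and its inverse. A rotation by $\pi/2$ has order $4$, so $(F_1F_2)^4=(F_2F_1)^4=\1_2$, which closes the argument. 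As a sanity check this agrees with the remark preceding the construction that the $F_\mu$ are reflections: the product of two reflections in $\R^2$ is a rotation by twice the angle between their mirror axes, and here that angle works out to $\pi/4$.

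I do not expect a genuine obstacle in this lemma, since it is a finite verification made transparent by the anti-diagonal block structure; the only spot requiring care is the evaluation $2f^2=1$, as it is precisely this identity that collapses the entries of $F_1F_2$ and $F_2F_1$ to $\pm1$ and $0$ and turns the products into clean rotations. The genuinely laborious parts of the order computation for $G(\ell)$ will instead be the analogous relations involving $R_2$ and $R_3(k)$ together with the mixed relations among the three generators, rather than this isolated statement about $R_1$.
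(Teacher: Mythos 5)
Your proof is correct and takes essentially the same route as the paper: both exploit the nested block anti-diagonal structure to reduce everything to the $2\times2$ products $F_1F_2=-F_2F_1$, and both hinge on the identity $2f^2=1$, which makes these products quarter-turn rotations. The only cosmetic difference is that the paper records the intermediate fact $R_1^4=-\1_8$ (an element of the center of $\SO{8}$) and then squares, whereas you compute $\F^8=\1_4$ directly; incidentally, your block-diagonal form $\F^2=\mathrm{diag}(F_1F_2,\,F_2F_1)$ is the correct one, the anti-diagonal form displayed in the paper being a typo that does not affect its conclusion.
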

\begin{svmultproof}
Observe that
\[
R_{1}^{2}=\left(\begin{array}{cc}
\F^{2} & 0_{4}\\
0_{4} & \F^{2}
\end{array}\right),
\]
 where 
\[
\F^{2}=\left(\begin{array}{cc}
0_{2} & F_{1}F_{2}\\
F_{2}F_{1} & 0_{2}
\end{array}\right)
\]
 and
\[
F_{1}F_{2}=\left(\begin{array}{cc}
0_{2} & -2f^{2}\\
2f^{2} & 0_{2}
\end{array}\right)=-F_{2}F_{1}.
\]
Since $2f^{2}=1$ we get immediately that $R_{1}^{4}=-\1_{8}\in C(\SO{8})$,
the center of $\SO{8}$. This implies $R_1^8=\1_8$.
\end{svmultproof}
The remaining computations are somewhat simpler if we rewrite the set of generators.
We set (suppressing the dependence on $k$)
\[
A=R_{1}\cdot R_{2}\cdot R_{3}(k).
\]
Considering the structure of the matrices
it follows that $A$ has
the form (where $\D=\D_k$)
\begin{equation}
A=\left(\begin{array}{cc}
0_{4} & \F\T\D\\
\F\S\D & 0_{4}
\end{array}\right).\label{eq:represent1_A}
\end{equation}
We get 
\[
A^{2}=\left(\begin{array}{cc}
\F\T\D\F\S\D & 0_{4}\\
0_{4} & \F\S\D\F\T\D
\end{array}\right).
\]
From this we get an explicit form for $A^{4}$ and for $A^{8}.$ 
The following lemmas concern orders of
some elements, some relations and some generating properties.  
\begin{lemma}\label{lem:rel2}
Fix $\ell\in\N$, let $k=4+8\ell=4\tau$, then we have
\begin{enumerate}
\item  $A^{8}=R_{3}^{8}$,
\item $R_3^{2\tau}=\1_8$,
\item if $\tau\equiv 3\mod 4$ then $R_3^\tau=\1_8$, 
\item that the order of $A$ is $2k$.
\end{enumerate}
\end{lemma}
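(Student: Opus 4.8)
The plan is to compute the relevant matrix products explicitly and track the powers. First I would establish the building blocks: since $D_k$ is a rotation in $\R^2$ by angle $2\pi/k$ (this follows from the definitions of $d_1(k),d_2(k)$, which are $\cos$ and $\sin$ of $2\pi/k$ up to sign), the $4\times4$ matrix $\D_k$ is a block-diagonal rotation, so $\D_k^k = \1_4$ and more generally $\D_k^n$ is the rotation by $2\pi n/k$ in each block. I would record that $\F$, $\S$, $\T$ are built from reflections and compute the key conjugation/commutation relations between $\D_k$ and the reflection-type matrices $\F\T$ and $\F\S$. The essential observation should be that conjugating $\D_k$ by a reflection inverts it (a reflection anticommutes with a rotation in the appropriate sense), so products like $\F\T\D\F\S\D$ collapse: the two rotation factors either add or cancel depending on how the intervening reflections act.

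For part (1), I would use the block form of $A^2$ from \eqref{eq:represent1_A} and square again to get $A^4$ as a block-diagonal matrix whose diagonal blocks are $(\F\T\D\F\S\D)^2$ and $(\F\S\D\F\T\D)^2$. Using the anticommutation of $\D$ with the reflection blocks, these products should simplify so that the reflection parts cancel in $A^8$ and only a pure power of the rotation $\D$ survives, which I expect to be exactly $R_3(k)^8 = \D_k^8$ (viewed in the $8\times8$ block-diagonal form $R_3$). Concretely, I anticipate showing $A^4 = \pm R_3^4$ or a similar clean identity, then squaring to kill the sign and land on $A^8 = R_3^8$.

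Parts (2) and (3) are the arithmetic of the rotation angle. Since $\D_k$ rotates by $2\pi/k = 2\pi/(4\tau)$, we have $R_3^{2\tau} = \D_k^{2\tau}$, which is rotation by $2\pi\cdot 2\tau/(4\tau) = \pi$ in each block; but one must check the sign/identification convention — a rotation by $\pi$ is $-\1$, so I would need to verify that $R_3^{2\tau}=\1_8$ really holds, likely because the full order of $R_3$ divides $2\tau$ rather than $k$ due to how the blocks combine (or using the $[a,b]\sim[-a,-b]$-type identification implicit in $\SO8$). For (3), when $\tau\equiv 3\bmod 4$, I would compute $R_3^\tau = \D_k^\tau$ as rotation by $2\pi\tau/(4\tau)=\pi/2$ per block and show the specific matrix equals $\1_8$ under the relevant parity — this is the step where the congruence condition on $\tau$ enters, so I would expand $\D_k^\tau$ carefully using $\zeta_k$-arithmetic. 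Finally, part (4) follows by combining (1)–(3): from $A^8 = R_3^8$ and the order of $R_3$ I would deduce $A^{2k}=\1_8$, then rule out smaller orders by checking that no proper divisor of $2k$ annihilates $A$, using the explicit $\D_k$-power appearing in $A^8$.

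The main obstacle I expect is bookkeeping the reflections and the sign conventions when collapsing $A^8$ to $R_3^8$: the matrices $\F,\S,\T$ are products of reflections with specific off-diagonal placements, and verifying that all reflection factors cancel (leaving a pure rotation) while correctly tracking the factors of $-\1$ from $R_1^4=-\1_8$ (Lemma~\ref{lem:rel1}) is where an error is easiest to make. The congruence case in (3) is the other delicate point, since it is the only place where the residue of $\tau$ modulo $4$ is used, so the proof must isolate exactly which power of the quarter-rotation yields the identity.
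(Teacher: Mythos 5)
Your approach founders on one decisive miscalculation: $D_k$ is \emph{not} the rotation by $2\pi/k$. You correctly identify $d_1(k)=\cos(2\pi/k)$ and $d_2(k)=\sin(2\pi/k)$, but the sine sits on the \emph{diagonal} of $D_k$ and the cosine off the diagonal: with $\theta=2\pi/k$,
\[
D_k=\left(\begin{array}{cc} -\sin\theta & -\cos\theta\\ \cos\theta & -\sin\theta\end{array}\right),
\]
which is the rotation by $\theta+\frac{\pi}{2}=\frac{\tau+1}{2\tau}\pi$ --- there is an extra quarter-turn. That offset is precisely what makes the lemma true, and the paper's proof turns on it: $R_3^{2\tau}$ is the blockwise rotation by $(\tau+1)\pi$, hence $\1_8$ because $\tau$ is odd, and $R_3^{\tau}$ is the rotation by $\frac{\tau+1}{2}\pi$, hence $\1_8$ exactly when $4\mid\tau+1$, i.e. $\tau\equiv3\mod4$ --- this is where the congruence enters. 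With your angle, your own computations give $R_3^{2\tau}=-\1_8$ and $R_3^{\tau}$ equal to a quarter-turn, so statements (2) and (3) become \emph{false} as you set them up, and no bookkeeping can rescue them. Your fallback --- that the sign is absorbed by an identification ``$[a,b]\sim[-a,-b]$ implicit in $\SO{8}$'' --- is not available: $G(\ell)$ is a group of honest $8\times8$ matrices (Definition 1); the $\pm$ identification belongs to the biquaternionic description of $\SO{4}$ used in the four-dimensional sections and plays no role here.

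A second, lesser, problem is your mechanism for part (1). The relevant intervening blocks $F_\nu T_\mu$ and $F_\nu S_\mu$ are each products of \emph{two} reflections, hence rotations; since $\SO{2}$ is abelian they \emph{commute} with $D_k$ --- they do not conjugate it to its inverse. So in $A^2$ the two $D_k$ factors always collect, e.g. $F_1T_2D_kF_1S_2D_k=F_1T_2F_1S_2D_k^2$; the ``cancel'' alternative you leave open would be fatal, since then $A^8$ would contain no power of $D_k$ at all and could not equal $R_3^8$. (Your guessed intermediate identity is right in outcome: one finds $A^4=-R_3^4$, hence $A^8=R_3^8$.) Finally, for (4) no divisor-by-divisor check is needed: from (1) and (2), $A^{2k}=A^{8\tau}=(R_3^{2\tau})^4=\1_8$, so the order of $A$ divides $2k=8\tau$, and since it is a multiple of $8$ and of the odd number $\tau$, it equals $2k$. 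Your outline of (4) is salvageable, but only after the angle of $D_k$ is corrected.
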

\begin{svmultproof}
In the first step we show the relation between $A$ and $R_{3}.$
Here we refine the Equation (\ref{eq:represent1_A}) writing $2\times2$-blocks
instead of $4\times4$-blocks: a short calculation yields 
\begin{equation}\label{eq:represent4_A}
A=\left(\begin{array}{cccc}
0_{2} & 0_{2} & F_{1}T_{2}D & 0_{2}\\
0_{2} & 0_{2} & 0_{2} & F_{2}T_{1}D\\
0_{2} & F_{1}S_{2}D & 0_{2} & 0_{2}\\
F_{2}S_{1}D & 0_{2} & 0_{2} & 0_{2}
\end{array}\right)
\end{equation}
Let us note that the linear maps $F_{\nu}S_{\mu}$ or $F_{\nu}T_{\mu}$ for $\mu,\nu\in\{1,2\}$
are rotations, and since $\SO{2}$ is abelian these linear maps commute
with $D$. This remark will be useful for several points in our proof.
Looking at $A^2$ and using this observation we obtain
\begin{eqnarray*}
A^{2}&=&\left(\begin{array}{cccc}
0_{2} & F_{1}T_{2}D_kF_{1}S_{2}D_k & 0_{2} & 0_{2}\\
F_{2}T_{1}D_kF_{2}S_{1}D_k & 0_{2} & 0_{2} & 0_{2}\\
0_{2} & 0_{2} & 0_{2} & F_{1}S_{2}D_kF_{2}T_{1}D_k\\
0_{2} & 0_{2} & F_{2}S_{1}D_kF_{1}T_{2}D_k & 0_{2}
\end{array}\right)\\
&=&\left(\begin{array}{cccc}
0_{2} & F_{1}T_{2}F_{1}S_{2}D_k^2 & 0_{2} & 0_{2}\\
F_{2}T_{1}F_{2}S_{1}D_k^2 & 0_{2} & 0_{2} & 0_{2}\\
0_{2} & 0_{2} & 0_{2} & F_{1}S_{2}F_{2}T_{1}D_k^2\\
0_{2} & 0_{2} & F_{2}S_{1}F_{1}T_{2}D_k^2 & 0_{2}
\end{array}\right).
\end{eqnarray*}
The products in front of the $D_k^2$ give $\pm\1_2$ and from here we see 
\[
A^{8}=\left(\begin{array}{cccc}
D_k^{8} & 0_{2} & 0_{2} & 0_{2}\\
0_{2} & D_k^{8} & 0_{2} & 0_{2}\\
0_{2} & 0_{2} & D_k^{8} & 0_{2}\\
0_{2} & 0_{2} & 0_{2} & D_k^{8}
\end{array}\right).
\]
Therefore $A^{8}=R_{3}(k)^{8}$.\\[2mm]
In the second step we prove the statement on the order of $R_3$. For
$k=12,\ 20,\ 28,\dots$ we define $\tau\in\N$, $\tau$ odd by
\begin{equation}
k=4\tau.\label{eq:define_tau}
\end{equation}
The first step is to show that
\begin{equation}
R_{3}^{2\tau}=\1_{8}.\label{eq:R3^2tau}
\end{equation}
 The matrix $D_k$ is a rotation in $\R^{2}$ by the angle $\frac{\pi}{2}+\arg(\zeta_{k})$
by elementary geometry. Choosing $\zeta_k$ such that $\arg(\zeta_k)=\frac{2\pi}{k}$ we get
\[
\frac{\pi}{2}+\frac{\pi}{2\tau}=\frac{\tau+1}{2\tau}\mbox{\ensuremath{\pi}.}
\]
If $\tau=3\mod4$ this has the form $2\frac{s}{\tau}\mbox{\ensuremath{\pi}}$
for some $s\in\N$ and hence the $\tau$-th power of $R_3$ is equal to $\1_{8}.$ This is the third assertion of the lemma. 
If $\tau=1\mod4$ then
\[
D^{2\tau}=\1_{2.}
\]
This proves the claim Equation (\ref{eq:R3^2tau}). \\[2mm]
Now we look at
\[
A^{2k}=A^{8\tau}=R_{3}^{8\tau}=(R_{3}^{2\tau})^{4}=\1_{8}.
\]
This proves that the order of $A$ divides $2k$. Since the order
of $A$ is a multiple of $8$ and a multiple of the odd number $\tau$
its order is $2k$. If we choose a $\zeta_k$ to be one of the other primitive $k$-th roots of unity, the argument can be easily adjusted.
This completes the proof of the lemma.
\end{svmultproof}
\begin{lemma}\label{lem:rels}
We have the following relations:
\begin{enumerate}
\item $\left(AR_{1}\right)^{2}=\left(R_{1}A\right)^{2}=\1_{8}$
\item $\left(A^{3}R_{1}^{3}\right)^{2}=\left(R_{1}^{3}A^{3}\right)^{2}=\1_{8}$
\item $R_{1}^{2}A^{2}R_{1}^{2}=A^{2}$.
\end{enumerate}
\end{lemma}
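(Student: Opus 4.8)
The plan is to regard every power of $R_1$ and $A$ as a monomial matrix over $\mathrm{O}(2)$: written in $2\times2$ blocks, each is a generalized permutation matrix whose nonzero blocks lie in $\mathrm{O}(2)$. Products of such matrices are again of this form, so each relation collapses to a handful of computations with $2\times2$ orthogonal matrices. Three elementary facts drive everything: (i) $\SO{2}$ is abelian, so the rotation $D=D_k$ commutes with every rotation block $F_\nu S_\mu$, $F_\nu T_\mu$ and with $J:=F_1F_2$ (while $F_2F_1=-J=J^{-1}$); (ii) a reflection $Z\in\mathrm{O}(2)$ satisfies $Z^2=\1_2$ and $ZDZ=D^{-1}$; (iii) consequently $(ZD^p)^2=\1_2$ for every reflection $Z$ and every power $D^p$. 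I also use the cyclic-word identity $(YX)^2=Y(XY)^2Y^{-1}$, so that in (1) and (2) it suffices to treat one ordering and transport the result by conjugation.

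For (1) I would write $R_1A=R_1^2R_2R_3$, which in $4\times4$ blocks equals $\mathrm{diag}(\F^2\S\D,\F^2\T\D)$. Since $\F^2=\mathrm{diag}(J,J^{-1})$, and since $JS_1=J^{-1}S_2=T_2$ and $JT_1=J^{-1}T_2=S_1$ are reflections, the first factor is $\mathrm{diag}(T_2D,T_2D)$ and the second is block-antidiagonal with both nonzero blocks equal to $S_1D$. Thus every nonzero $2\times2$ block of $R_1A$ is a reflection times $D$, hence squares to $\1_2$ by (iii); therefore $(R_1A)^2=\1_8$, and $(AR_1)^2=A(R_1A)^2A^{-1}=\1_8$.

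Relation (3) is the shortest. Here $R_1^2=\mathrm{diag}(J,J^{-1},J,J^{-1})$ is block diagonal, while by \eqref{eq:represent4_A} every nonzero $2\times2$ block of $A^2$ is a rotation (a short check gives $\pm D^2$ and $JD^2$) sitting in a position $(i,j)$ with $i,j$ of opposite parity. Hence the $(i,j)$ block of $R_1^2A^2R_1^2$ is $J^{\pm1}\,(A^2)_{ij}\,J^{\mp1}$; since $J$ and $(A^2)_{ij}$ are rotations they commute by (i) and the flanking factors cancel, leaving the block unchanged, so $R_1^2A^2R_1^2=A^2$.

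Relation (2) is the laborious one and I expect it to be the main obstacle. Note first that (3) controls only the even power $A^2$ and does not propagate to $A^3$ — indeed $R_1^2$ neither commutes nor anticommutes with $A$, as one sees block by block — so a direct computation seems unavoidable. From $\F^3=\F^2\F$ one reads off its $2\times2$ blocks $JF_1$ and $J^{-1}F_2$, both reflections, and $A^3=A^2A$ has all four nonzero blocks equal to a rotation times $D^3$. Multiplying $R_1^3A^3$ I would show that its nonzero $2\times2$ blocks occupy positions $(1,2),(2,1),(3,3),(4,4)$ and equal $S_2D^3,\,-S_1D^3,\,-S_2D^3,\,S_2D^3$ respectively, each a reflection times $D^3$. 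Squaring, the two diagonal blocks give $\1_2$ by (iii), while the off-diagonal pair multiplies, using $D^3S_1=S_1D^{-3}$, to $-S_2S_1=\1_2$ and $-S_1S_2=\1_2$; thus $(R_1^3A^3)^2=\1_8$ and $(A^3R_1^3)^2=A^3(R_1^3A^3)^2A^{-3}=\1_8$. The only genuine work is the bookkeeping that pins down these reflections and signs from the products $F_\nu S_\mu F_{\nu'}T_{\mu'}$ and $J^{\pm1}F_\nu$, each step using nothing beyond (i)--(iii).
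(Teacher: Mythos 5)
Your proof is correct and takes essentially the same approach as the paper: both reduce each relation to $2\times2$-block computations over $\mathrm{O}(2)$, using that rotations commute (which gives (3) by trivial conjugation) and that elements of $\mathrm{O}(2)\setminus\mathrm{SO}(2)$ are involutions (which gives (1) and (2)). The only differences are cosmetic: you use the conjugation identity $(YX)^2=Y(XY)^2Y^{-1}$ to pass between the two orderings, and you carry out relation (2) in full --- which the paper dispatches with ``follows in a similar form'' --- with block positions and signs that check out against the paper's setup.
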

\begin{proof}
We prove the first and the third statement in detail, the second one
follows in a similar form. Recall
\begin{equation}
R_{1}=\left(\begin{array}{cc}
0_{4} & \F\\
\F & 0_{4}
\end{array}\right),\ \mbox{ where }\F=\left(\begin{array}{cc}
0_{2} & F_{1}\\
F_{2} & 0_{2}
\end{array}\right)\mbox{.}\label{eq:repr_R1}
\end{equation}
 For $R_{1}A$ we obtain from Equ. (\ref{eq:represent4_A})
\[
R_{1}A=\left(\begin{array}{cccc}
F_{1}F_{2}S_{1}D & 0_{2} & 0_{2} & 0_{2}\\
0_{2} & F_{2}F_{1}S_{2}D & 0_{2} & 0_{2}\\
0_{2} & 0_{2} & 0_{2} & F_{1}F_{2}T_{1}D\\
0_{2} & 0_{2} & F_{2}F_{1}T_{2}D & 0_{2}
\end{array}\right).
\]
Squaring this expression gives the squares of the two nontrivial entries
in the upper left corner. These entries are elements in $\OO{2}\setminus\SO{2}$
and hence their squares are $-\1_{2}$. For the entries in the lower
right corner we get their products on the diagonal. The entries are
again in $\OO{2}\setminus\SO{2}$. From $F_1F_2=-F_2F_1$ it follows that both
entries are the same and hence the products are equal to $\1_{2}$. \\{3mm}
The proof of the second statement is similar to the one we have just
seen, so let us directly go to the third statement. We write $A^{2}$ in $2\times2$-block
form as 
\[
A^{2}=\left(\begin{array}{cccc}
0_{2} & M_{1} & 0_{2} & 0_{2}\\
M_{2} & 0_{2} & 0_{2} & 0_{2}\\
0_{2} & 0_{2} & 0_{2} & M_{3}\\
0_{2} & 0_{2} & M_{4} & 0_{2}
\end{array}\right),\mbox{ where }M_{\nu}\mbox{ are rotations in }\R^{2}\mbox{ for }\nu=1,\dots,\ 4.
\]
$R_1^2$ has the form 
\begin{equation}\label{equ:r1square}
R_{1}^{2}=\left(\begin{array}{cccc}
\sigma & 0_{2} & 0_{2} & 0_{2}\\
0_{2} & \sigma^{-1} & 0_{2} & 0_{2}\\
0_{2} & 0_{2} & \sigma & 0_{2}\\
0_{2} & 0_{2} & 0_{2} & \sigma^{-1}
\end{array}\right),\mbox{ where }\sigma=F_{1}F_{2}=\left(\begin{array}{cc}
0 & -1\\
1 & 0
\end{array}\right)\mbox{ is a rotation}.
\end{equation}
 By our previous remark any rotations commute and it is a simple computation
to check that $R_1^{2}A^{2}R_1^{2}$ has a similar form as $A^{2}$ where
$M_{\nu}$ is replaced by $\sigma M_{\nu}\sigma^{-1}$ (or by
$\sigma^{-1} M_{\nu}\sigma$). But this is equal to $M_{\nu}$ (by
the commutativity) and hence the lemma is established. 
\end{proof}
\begin{lemma}\label{lem:gens}
For all $\ell\in\N$ we have  $G(\ell)=\langle R_{1},A\rangle$, the group generated by $R_{1}$ and $A$.
\end{lemma}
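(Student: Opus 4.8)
The plan is to prove the asserted equality by two inclusions, with essentially all the content in $G(\ell)\subseteq\langle R_1,A\rangle$. The inclusion $\langle R_1,A\rangle\subseteq G(\ell)$ is immediate, since $R_1\in G(\ell)$ and $A=R_1\cdot R_2\cdot R_3(k)\in G(\ell)$. For the reverse inclusion it suffices to exhibit $R_3(k)$ as an element of $\langle R_1,A\rangle$: once this is done, the relation $A=R_1 R_2 R_3(k)$ gives $R_2=R_1^{-1}A\,R_3(k)^{-1}\in\langle R_1,A\rangle$, so all three generators of $G(\ell)$ lie in $\langle R_1,A\rangle$.

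To produce $R_3(k)$ I would begin from the first assertion of Lemma~\ref{lem:rel2}, $A^8=R_3(k)^8$, which places $R_3(k)^8$ in $\langle R_1,A\rangle$. The remaining question is purely one of cyclic arithmetic inside $\langle R_3(k)\rangle$, namely whether $R_3(k)$ is a power of $R_3(k)^8$. Since $R_3(k)$ is block diagonal with four copies of the planar rotation $D_k$ through the angle $\tfrac{\tau+1}{2\tau}\pi$ recorded in the proof of Lemma~\ref{lem:rel2}, an elementary angle computation (using that $\tau$ is odd) shows that the order of $R_3(k)$ is $\tau$ when $\tau\equiv 3\bmod 4$ and is $2\tau$ when $\tau\equiv 1\bmod 4$, in agreement with the second and third assertions of that lemma.

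I would then split into these two cases. If $\tau\equiv 3\bmod 4$ the order of $R_3(k)$ is the odd number $\tau$, hence coprime to $8$, so $\langle R_3(k)^8\rangle=\langle R_3(k)\rangle$ and already $R_3(k)\in\langle A^8\rangle\subseteq\langle R_1,A\rangle$. If $\tau\equiv 1\bmod 4$ the order is $2\tau$ and $\gcd(8,2\tau)=2$, so $A^8=R_3(k)^8$ only delivers the index-two subgroup $\langle R_3(k)^2\rangle$. To recover the missing factor of two I would use Lemma~\ref{lem:rel1} in the form $R_1^4=-\1_8$, together with the observation that for $\tau\equiv 1\bmod 4$ the power $R_3(k)^\tau$ is rotation through $\tfrac{\tau+1}{2}\pi\equiv\pi\pmod{2\pi}$, that is $R_3(k)^\tau=-\1_8=R_1^4$. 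Thus both $R_3(k)^2$ and $R_3(k)^\tau$ belong to $\langle R_1,A\rangle$, and since $\gcd(\tau+2,2\tau)=1$ the product $R_3(k)^{\tau+2}=R_1^4\,R_3(k)^2$ is a generator of the cyclic group $\langle R_3(k)\rangle$; in particular $R_3(k)\in\langle R_1,A\rangle$. In either case we obtain $R_3(k)\in\langle R_1,A\rangle$, then $R_2\in\langle R_1,A\rangle$, and hence $G(\ell)=\langle R_1,A\rangle$.

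The only genuine obstacle is the $2$-part issue in the case $\tau\equiv 1\bmod 4$: the relation $A^8=R_3(k)^8$ by itself loses exactly one factor of two in the order of $R_3(k)$, and the key point is to see that the missing involution is precisely $-\1_8=R_1^4$, which coincides with $R_3(k)^\tau$. Everything else reduces to bookkeeping with orders of planar rotations and the defining relation $A=R_1 R_2 R_3(k)$.
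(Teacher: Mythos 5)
Your proof is correct, and it follows the paper's overall skeleton---reduce everything to showing $R_3(k)\in\langle R_1,A\rangle$, recover $R_2$ from $A=R_1R_2R_3(k)$, and split according to $\tau\bmod 4$---but in the hard case $\tau\equiv 1\bmod 4$ you take a genuinely different route. The paper's argument there hinges on the additional matrix identity $A^4=R_3^{\tau+4}$ (asserted as ``easy to check''), i.e.\ on deciding which of the two square roots of $R_3^{8}$ the element $A^{4}$ actually is, and then inverting the exponent $\tau+4$ modulo $2\tau$, with explicit formulas for the inverse $s$ depending on $\tau\bmod 8$. You instead stay with the coarser relation $A^{8}=R_3^{8}$, which (since $\gcd(8,2\tau)=2$) only yields the index-two subgroup $\langle R_3^{2}\rangle$, and you recover the missing involution from $R_1^{4}=-\1_8=R_3^{\tau}$, concluding via $\gcd(\tau+2,2\tau)=1$ that $R_3^{\tau+2}=R_1^{4}R_3^{2}$ generates $\langle R_3\rangle$. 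Your arithmetic checks out: for $\tau\equiv 1\bmod 4$ the power $R_3^{\tau}$ is rotation by $\tfrac{\tau+1}{2}\pi\equiv\pi\pmod{2\pi}$ in each $2\times2$ block, and $\tau+2$ is odd and coprime to $\tau$. The trade-off is this: your route is more self-contained, using only facts already established (Lemma~\ref{lem:rel1} and Lemma~\ref{lem:rel2}) and requiring no further matrix computation; the paper's route expresses $R_3$ as a pure power $A^{4s}$ of $A$ alone, and its explicit exponents depending on $\tau\bmod 8$ prefigure the analogous explicit exponents $q_j(\tau)$ that reappear later in the normalizer computation (Lemma~\ref{lem:normalizer_matrices}). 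In the case $\tau\equiv 3\bmod 4$ the two arguments coincide.
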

\begin{svmultproof}
Fix $k\in\N$, and $\tau$ odd as in Equ. (\ref{eq:define_tau}).
The argument is  different for the two cases $\tau\equiv1\mod4$
and $\tau\equiv3\mod4$.\\[2mm] 
In the second case we have $R_{3}=R_{3}(k)$
is equal to $R_{3}=\left(R_{3}^{8}\right)^{s}$, where $s$ is chosen
such that $8s=1\mod\tau$. It follows that $R_{3}=A^{8s}$. Therefore
$R_{3}\in\langle R_{1},A\rangle$. Then with Equ. (\ref{eq:R3^2tau})
we get
\[
R_{2}=R_{1}^{7}R_{1}R_{2}R_{3}R_{3}^{2\tau-1}=R_{1}^{7}A(A^{8s})^{2k-1}\in\langle R_{1},A\rangle.
\]
This completes the second case. In the first case, i.e. when $\tau=1\mod4$
we find two solutions for $X^{2}=R_{3}^{8}$, namely $X=R_{3}^{4}$
and $X=R_{3}^{\tau+4}$. It is easy to check that 
\[
A^{4}=R_{3}^{\tau+4}.
\]
Choose $s\in\N$ such that $s(\tau+4)=1\mod2\tau$. In fact we can
give an explicit form for $s\in\N$. It depends on the residue $\tau\equiv1\mod8$
or $\tau\equiv5\mod8$. If $\tau=1+8\mu$, where $\mu\in\N$ we have
$s=6\mu+1$ (since $s\left(\tau+4\right)=(6\mu+1)(4+\tau)=(6\mu+1)(5+8\mu)=(3\mu+2)2\tau+1$)
and if $\tau=5+8\mu$, $\mu\in\N_{0}$ then $s=\left(14\mu+9\right)$
by a similar computation as above.

Then $A^{4s}=R_{3}$. The rest of the proof goes along the same lines
as in the previous case. 
\end{svmultproof}
For later use we note some simple relations
\begin{equation}\label{equ:gen_fix_space_1}
R_{2}^{2}=R_{1}A^{2}R_{1}^{3}A^{2}
\end{equation}
 and 
\begin{equation}\label{equ:gen_fix_space_2}
-R_{2}^{2}=R_{1}A^{2}R_{1}^{3}A^{2+k}.
\end{equation}

\begin{lemma}\label{lem:no_commut} 
There is no $\sigma\in\N$ such that
\[
AR_{1}^{2}=R_{1}^{2}A^{\sigma}.
\]
\end{lemma}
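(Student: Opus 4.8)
\emph{Proof plan.} The idea is to turn this non-existence statement into the assertion that a certain conjugate of $A$ is not a power of $A$. Multiplying the putative relation $AR_1^2=R_1^2A^\sigma$ on the left first by $R_1^{-2}$ and then by $A^{-1}$ shows that it is equivalent to
\[
A^{\sigma-1}=A^{-1}R_1^{-2}AR_1^2 .
\]
Hence it suffices to prove that the element $B:=A^{-1}R_1^{-2}AR_1^2$ is not a power of $A$. Note that $R_1^{-2}$ is a genuine group element, since $R_1^4=-\1_8$ is central by Lemma~\ref{lem:rel1}, so $R_1^{-2}=-R_1^2$.

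First I would compute the intermediate conjugate $R_1^{-2}AR_1^2$ in $2\times2$-block form, using the expression for $A$ in Equ.~(\ref{eq:represent4_A}) together with the diagonal form $R_1^2=\mathrm{diag}(\sigma,\sigma^{-1},\sigma,\sigma^{-1})$ from Equ.~(\ref{equ:r1square}), where $\sigma^2=-\1_2$. The four nonzero blocks of $A$ are products of the rotations $F_\nu S_\mu$, $F_\nu T_\mu$ with the rotation $D_k$, hence themselves lie in $\SO{2}$ and commute with $\sigma$. Therefore conjugation by $R_1^2$ leaves the two blocks in rows $1$ and $2$ unchanged (the factors $\sigma^{-1}(\cdot)\sigma$ and $\sigma(\cdot)\sigma^{-1}$ cancel), while the two blocks in rows $3$ and $4$ acquire a factor $\sigma^{\pm2}=-\1_2$. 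Thus $R_1^{-2}AR_1^2$ has exactly the block pattern of $A$ but with the blocks in rows $3,4$ sign-reversed. Left-multiplying by $A^{-1}$ then cancels the common block permutation and yields the block-diagonal matrix
\[
B=\mathrm{diag}(-\1_2,-\1_2,\1_2,\1_2),
\]
which is block-diagonal but whose four diagonal blocks are \emph{not} all equal.

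It then remains to observe that no power of $A$ has this shape. Every power $A^n$ is a monomial $2\times2$-block matrix whose block permutation is the $n$-th power of the $4$-cycle visible in Equ.~(\ref{eq:represent4_A}); consequently $A^n$ is block-diagonal precisely when $4\mid n$. For such $n$ the computation of $A^2$, and hence of $A^4$, in the proof of Lemma~\ref{lem:rel2} shows that $A^4$ is block-diagonal with all four diagonal blocks equal (each equal to $-D_k^4$), and therefore so is $A^n=(A^4)^{n/4}$. In particular every block-diagonal power of $A$ has four identical diagonal blocks, whereas $B$ does not. This contradicts $A^{\sigma-1}=B$ and proves the lemma.

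The only delicate point is the sign bookkeeping in the conjugation: the alternation $\sigma,\sigma^{-1}$ along the diagonal of $R_1^2$ is precisely what makes the sign change affect only two of the four blocks, producing the asymmetry in $B$. Everything else rests on the monomial/block-permutation structure already exploited in the preceding lemmas, so I expect the verification that $B=\mathrm{diag}(-\1_2,-\1_2,\1_2,\1_2)$ (and not, say, $\pm\1_8$) to be the main thing to get right.
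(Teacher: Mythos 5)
Your proof is correct, but it follows a genuinely different route from the paper's. The paper first pins down the exponent: assuming $AR_1^2=R_1^2A^{\sigma}$, it uses $R_1^2A^2=A^{2+k}R_1^2$ (Lemma~\ref{lem:rels}(3) together with the centrality of $A^k=R_1^4$) to force $\sigma=2\tau+1$, and then kills this single remaining identity by comparing \emph{positions} of the nonzero $2\times2$-blocks: the left-hand side has the monomial block pattern of $A$, while $A^{2\tau}R_1^2A$ has its nonzero blocks in different slots. You instead rule out all exponents at once by computing the conjugation defect $B=A^{-1}R_1^{-2}AR_1^2$ explicitly and showing it lies outside $\langle A\rangle$; your computation is right ($R_1^{\pm2}=\mathrm{diag}(\sigma^{\pm1},\sigma^{\mp1},\sigma^{\pm1},\sigma^{\mp1})$ with $\sigma=F_1F_2$, the blocks of $A$ are rotations, so conjugation leaves rows $1,2$ of $A$ alone and flips the sign of rows $3,4$, giving $B=\mathrm{diag}(-\1_2,-\1_2,\1_2,\1_2)$), and so is your exclusion step, since block-diagonal powers of $A$ are exactly the $A^{4n}=(A^4)^n$ and these have four \emph{equal} diagonal blocks. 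Your approach buys self-containedness (no appeal to Lemma~\ref{lem:rels} or to determining $\sigma$) and a slightly stronger conclusion ($R_1^{-2}AR_1^2\notin\langle A\rangle$, so no integer exponent works, not just no positive one); its cost is that it needs the \emph{values}, not just the positions, of the blocks of $A^4$. On that point be careful when citing Lemma~\ref{lem:rel2}: the paper's claim there that all four products in front of $D_k^2$ are $\pm\1_2$ is not quite accurate --- $F_1T_2F_1S_2=\1_2$ and $F_2T_1F_2S_1=-\1_2$, but $F_1S_2F_2T_1=F_2S_1F_1T_2=F_1F_2$ is a quarter-turn --- yet your stated conclusion survives, since each diagonal block of $A^4$ is then either $\1_2\cdot(-\1_2)D_k^4$ or $(F_1F_2)^2D_k^4$, i.e.\ equal to $-D_k^4$ in all four cases. (A purely cosmetic remark: you use $\sigma$ both for the exponent in the lemma and for the rotation $F_1F_2$ from Equ.~(\ref{equ:r1square}); choose a different letter for one of them.)
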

\begin{svmultproof}
Assume such a $\sigma\in\N$ exists, then 
\[
R_{1}^{2}A^{2+k}=A_{1}^{2}R_{1}^{2}=AR_{1}^{2}A^{\sigma}=R_{1}^{2}A^{2\sigma},
\]
which implies $\sigma=2\tau+1$, where $4\tau=k$. Since $A$ is explicitly
known, we can prove that the equation in our lemma does not hold.
For this we note that $R_1^6=-R_1^2$, so the third relation in Lemma \ref{lem:rels} reads $R_1^2 A^2=-A^2R_1^2$. Then the equation
\[AR_1^2=R_1^2A^{2\tau+1}\]
can be transformed to
\[AR_1^2=A^{2\tau}R_1^2A.\]
From the form $R_1^2$ in (\ref{equ:r1square}) and $A$ in (\ref{eq:represent4_A}) we see that $AR_1^2$ a structure with nontrivial $2\times 2$-blocks as in (\ref{eq:represent4_A}), while the right hand side has a $2\times 2$-block structure where the nontrivial blocks are located as indicated by the stars in the following matrix
\[\left(
  \begin{array}{cccc}
    0_2&0_2&0_2&*\\
    0_2&0_2&*&0_2\\
    0_2&*&0_2&0_2\\
    *&0_2&0_2&0_2 
  \end{array}
\right).\]
This proves that equality cannot hold. 
\end{svmultproof}
\section{The Abstract Generation Theorem}
In this section we provide the main technical result of this paper. It describes the structure of a set of groups which is derived from the relations satisfied by the generators. These relations are precisely the ones which we have derived for the generators of the groups $G(\ell),\ \ell\in\N$. 
\begin{theorem}{Theorem C (Abstract Generation Theorem)}\label{thm:Abstract-Generation-Result}
Let $R$ be a group with neutral element $e\in R$, let $r,a\in R$
be elements such that there exists a $k\in\N$, $k\ge12$ and $k\equiv4\mod8$
such that
\begin{enumerate}
\item $a^{2k}=e$, $a^{k}\in C(R)$ and $a^{k}=r^{4}$.
\item $(ar)^{2}=(ra)^{2}=(a^{3}r^{3})^{2}=e.$
\item $r^{2}a^{2}r^{2}=a^{2}$. 
\end{enumerate}
Then, we have
\begin{enumerate}
\item $r^{8}=\1$
\item $ar=r^{3}a^{k-1}$
\item $a^{3}r^{3}=ra^{k-3}$
\item $r^{2}a^{2}=a^{k+2}r^{2}$ 
\item $r^{2}a^{4}=a^{4}r^{2}$
\item $a^{4}r=ra^{2k-4}$
\item $a^{2}\in N_{R}\left(\langle r^{2}\rangle\right)$ and $r^{2}\in N_{R}(\langle a^{2}\rangle)$.
\end{enumerate}
Moreover let $G=\langle r,a\rangle$. Then, if $r^{2}a=a^{s}r^{2}$
for some $s\in\N_{0}$ then the order of $G$ is $8k,$ otherwise
it is $16k$. In order to describe the group more precisely we let
$\mathcal{A}=\langle a\rangle$ be the cyclic group generated by $a$.
Then in the first case $G$ consists of four cosets of $\mathcal{A}$,
i.e. 
\[
G=\mathcal{A}\cup r\mathcal{A}\cup r^{2}\mathcal{A}\cup r^{3}\mathcal{A}.
\]
In the second case $G$ is of order $16k$ and $G$ is the union of
$\mathcal{A}$ and the seven nontrivial cosets generated by $r,\ r^{2},\ r^{3},\ ar^{2},\ ar^{3},\ a^{2}r^{3},\ ra^{2}r^{3}$
respectively. 
\end{theorem}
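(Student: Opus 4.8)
The plan is to derive the seven asserted identities formally from the three defining relations, and then to pin down $|G|$ and the coset structure by enumerating the left cosets of $\mathcal{A}=\langle a\rangle$ in $G=\langle r,a\rangle$. For the identities the only tools needed are the consequences of hypothesis~(1): from $a^{k}=r^{4}$ and $a^{2k}=e$ one gets $r^{8}=(r^{4})^{2}=a^{2k}=e$, together with $r^{-1}=r^{3}a^{k}$, $r^{-2}=a^{k}r^{2}$, $r^{-3}=ra^{k}$, the involutivity $a^{-k}=a^{k}$, and reduction of $a$-exponents modulo $2k$. With these, $(ar)^{2}=e$ rewrites as $ar=r^{-1}a^{-1}=r^{3}a^{k}a^{-1}=r^{3}a^{k-1}$, and $(a^{3}r^{3})^{2}=e$ rewrites identically as $a^{3}r^{3}=r^{-3}a^{-3}=ra^{k}a^{-3}=ra^{k-3}$. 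Hypothesis~(3) gives $r^{2}a^{2}=a^{2}r^{-2}=a^{2}\,a^{k}r^{2}=a^{k+2}r^{2}$, and applying this twice yields $r^{2}a^{4}=a^{2(k+2)}r^{2}=a^{4}r^{2}$. The identity $a^{4}r=ra^{2k-4}$ then follows by combining these: from $a^{3}r^{3}=ra^{k-3}$ one has $a^{3}r=ra^{k-3}r^{-2}$, hence $a^{4}r=(ar)a^{k-3}r^{-2}=r^{3}a^{2k-4}r^{-2}=r^{3}a^{-4}r^{-2}=ra^{-4}$, the last step using that $a^{4}$ (hence $a^{-4}$) commutes with $r^{2}$. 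Finally the normaliser statements are read off directly, since $r^{2}a^{2}r^{-2}=a^{k+2}\in\langle a^{2}\rangle$ and $a^{-2}r^{2}a^{2}=a^{-2}\,a^{k+2}r^{2}=a^{k}r^{2}=r^{6}\in\langle r^{2}\rangle$.

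For the order I would run a coset enumeration of $\mathcal{A}$ (of order $2k$) in $G$: it suffices to produce a finite set of left cosets of $\mathcal{A}$ that is closed under left multiplication by $a$ and by $r$, since such a set is then a subgroup containing the generators and hence equals $G$. Left multiplication by $r$ permutes $\mathcal{A},r\mathcal{A},r^{2}\mathcal{A},r^{3}\mathcal{A}$ cyclically because $r^{4}=a^{k}\in\mathcal{A}$, and $ar=r^{3}a^{k-1}$ gives $a\cdot r\mathcal{A}=r^{3}\mathcal{A}$. The whole computation is controlled by whether $r^{2}$ normalises $\mathcal{A}$, which is exactly the dichotomy ``$r^{2}a=a^{s}r^{2}$ for some $s$''. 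If it holds, then $ar^{2}=r^{2}(r^{-2}ar^{2})\in r^{2}\mathcal{A}$ and likewise $a\cdot r^{3}\mathcal{A}=r^{3}\cdot r^{2}\mathcal{A}=r\mathcal{A}$, so the four cosets $\mathcal{A},r\mathcal{A},r^{2}\mathcal{A},r^{3}\mathcal{A}$ are already closed and $|G|=4\cdot 2k=8k$. If it fails, then $ar^{2}\mathcal{A}$ is a genuinely new coset, and I would keep applying $a$ and $r$ and reducing by the seven identities until the process terminates; the assertion is that it closes on the eight cosets with representatives $e,r,r^{2},r^{3},ar^{2},ar^{3},a^{2}r^{3},ra^{2}r^{3}$, whence $|G|=8\cdot 2k=16k$.

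The derivation of the seven identities is routine; the real work is the closure check in the $16k$ case, i.e.\ verifying that left multiplication by $a$ and by $r$ carries each of the eight listed cosets back into the list. This is a finite but bookkeeping-heavy computation that genuinely uses all of the identities (in particular $a^{4}r=ra^{2k-4}$ and the normaliser relations, to move single and double powers of $a$ past $r^{2}$ and $r^{3}$). The more delicate point, and the main obstacle to obtaining the exact order rather than merely an upper bound, is to exclude coincidences among the cosets, that is, to show the four (resp.\ eight) cosets are pairwise distinct and that $a$ really has order $2k$. A convenient device here is the subgroup $N=\langle a^{4}\rangle$, which is normal by $ra^{4}r^{-1}=a^{-4}$ and contains $a^{k}=r^{4}$: the quotient $G/N$ is then a group of order $16$ or $32$ according to the dichotomy, which reduces the distinctness question to an explicit small group, after which $|G|=|G/N|\cdot|N|$ returns the values $8k$ and $16k$.
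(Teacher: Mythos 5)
Your derivations of the seven identities are correct and complete, and they run essentially parallel to the paper's own: the same consequences of hypothesis (1) are used ($r^{-1}=r^{3}a^{k}$, $r^{-2}=a^{k}r^{2}$, $a^{-k}=a^{k}$), and where you deviate (your route to $a^{4}r=ra^{2k-4}$ via $a^{3}r=ra^{k-3}r^{-2}$ and commutation of $a^{4}$ with $r^{2}$, versus the paper's one-line $a^{4}r=a^{3}(ar)=a^{3}r^{3}a^{k-1}=ra^{k-3}a^{k-1}=ra^{2k-4}$) the argument is still valid. Your organization of the second half is also sound, and in principle leaner than the paper's: the paper proves $G=\bigcup_{j}C_{j}$ by induction on the number of $r$-blocks in an arbitrary word, reducing the tail $r^{j_{m-1}}a^{s_{m}}r^{j_{m}}$ through a $27$-entry table plus a further $9$-entry table for the recalcitrant case $ra^{2}r^{3}\mathcal{A}$, whereas your criterion (closure of the listed cosets under left multiplication by $a$ and by $r$, which suffices because both generators have finite order and $e\in\mathcal{A}$) would need only sixteen products. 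Your treatment of the $8k$ case is essentially complete modulo distinctness of the cosets.

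However, in the $16k$ case your proposal stops exactly where the proof begins: you write that you ``would keep applying $a$ and $r$ and reducing by the seven identities until the process terminates'' and that ``the assertion is that it closes'' on the eight cosets. That assertion is the entire mathematical content of the order statement, and it is not routine. For instance, $a\cdot\left(ra^{2}r^{3}\right)\mathcal{A}$ reduces (using $ar=r^{3}a^{k-1}$, centrality of $a^{k}$ and $ra=a^{k-1}r^{3}$) to a word of the form $r^{2}a^{\pm1}r^{2}$ times elements of $\mathcal{A}$ --- an odd power of $a$ sandwiched between two $r^{2}$'s, which is precisely the configuration the hypotheses do not control (that is the whole point of the dichotomy $r^{2}a=a^{s}r^{2}$); identifying its coset is one of the nontrivial entries of the paper's tables, and some reductions need a second pass (the paper's Table 2). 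Until these products are actually exhibited, you have neither the upper bound $|G|\le16k$ nor the coset list. Your closing remark is well taken: the exact order also requires the cosets to be pairwise distinct and $a$ to have order exactly $2k$, and the latter is not a consequence of the stated hypotheses (they only give $a^{2k}=e$; it is imported from Lemma 2 of the paper, and the paper's proof silently assumes it when it declares $|\mathcal{A}|=2k$). Your normal subgroup $N=\langle a^{4}\rangle$ (normal indeed, by identity 6) is a reasonable device for the distinctness question, but it too is only sketched. So both parts of what you yourself call ``the real work'' remain undone, and that is a genuine gap rather than a stylistic difference.
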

We will give the proof in a separate section near the end of the paper. 
\section{Proof of Theorem B}
Before we start with the actual proof of Theorem B we state one more lemma which relates to the coset decomposition provided by Theorem C. Observe that we have seen in Lemma \ref{lem:no_commut} that our generators do not satisfy a relation of the form $R^2A=A^sR^2$ and therefore we are in the second case, where we have the eight cosets as described in Theorem C.  
\begin{lemma}
\label{lem:normalizer_matrices} Let $H=\langle R_{2}^{2}\rangle$
and $H'=\langle-R_{2}^{2}\rangle$, then $\dim\Fix{H}=\dim\Fix{H'}=4$,
and the normalizers of $H$ and $H'$ are equal, i.e. $N_{G(\ell)}(H)=N_{G(\ell)}(H')$. The normalizer $N_{G(\ell)}(H)$
is an index $2$ subgroup of $G(\ell)$ and the Weyl groups
\[
W(H)=N_{G(\ell)}(H)/H\ \mbox{ and }\ W(H')=N_{G(\ell)}(H')/H'
\]
are isomorphic. The groups $W(H),\ W(H')$ are isomorphic to $G_{3}(\tau)$.
They act absolutely irreducibly on the corresponding fixed point subspaces.
Moreover we have
\[
\Fix{H}\oplus\Fix{H'}=\R^{8}.
\]
\end{lemma}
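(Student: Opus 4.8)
The plan is to analyze the matrices $R_2^2$ and $-R_2^2$ explicitly to establish the dimension claims, then identify the normalizer and Weyl group structure through a careful bookkeeping of the eight cosets provided by Theorem C. Let me sketch how I would prove each assertion in turn.

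\textbf{Computing the fixed point spaces.} First I would compute $R_2^2$ explicitly. Since $R_2 = \mathrm{diag}(\S,\T)$ in $4\times 4$-block form, we have $R_2^2 = \mathrm{diag}(\S^2,\T^2)$. Now $\S = \mathrm{diag}(S_1,S_2)$ and a direct calculation gives $S_1^2 = S_2^2 = \1_2$, so $\S^2 = \1_4$; similarly $\T = \begin{pmatrix} 0_2 & T_1 \\ T_2 & 0_2 \end{pmatrix}$ yields $\T^2 = \mathrm{diag}(T_1 T_2, T_2 T_1)$, and since $T_1 T_2 = T_2 T_1 = \mathrm{diag}(-1,-1) = -\1_2$, we get $\T^2 = -\1_4$. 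Hence $R_2^2 = \mathrm{diag}(\1_4, -\1_4)$, which is an involution with $+1$-eigenspace exactly the first four coordinates. Therefore $\Fix{\langle R_2^2\rangle} = \R^4 \oplus \{0\}$ is four-dimensional, and $-R_2^2 = \mathrm{diag}(-\1_4,\1_4)$ gives $\Fix{H'} = \{0\}\oplus \R^4$, also four-dimensional, with $\Fix{H}\oplus\Fix{H'} = \R^8$ immediate. Since $R_2^2$ and $-R_2^2$ generate the same subgroup up to the sign one must check, I would confirm $H = \langle R_2^2\rangle \cong \Z/2$ and $H' \cong \Z/2$ as distinct subgroups, and note that any element of $G(\ell)$ either preserves or swaps the two summands $\R^4\oplus 0$ and $0\oplus\R^4$ according to its block-antidiagonal/diagonal type.

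\textbf{The normalizer.} An element $g\in G(\ell)$ normalizes $H=\langle R_2^2\rangle$ iff $g R_2^2 g^{-1} \in \{\1_8, R_2^2\}$, i.e. iff $g R_2^2 g^{-1} = R_2^2$ (since the only nontrivial element of $H$ is $R_2^2$). Because $R_2^2 = \mathrm{diag}(\1_4,-\1_4)$, conjugation by a block-diagonal $g$ fixes $R_2^2$ while conjugation by a block-antidiagonal $g$ sends $R_2^2 \mapsto -R_2^2 \notin H$. Thus $N_{G(\ell)}(H)$ is precisely the subgroup of block-diagonal elements, and the same computation shows it equals $N_{G(\ell)}(H')$ since $-R_2^2$ transforms identically. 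Using the coset decomposition from Theorem C, I would identify which of the eight cosets $\A, r\A, r^2\A, r^3\A, ar^2\A, ar^3\A, a^2 r^3\A, ra^2 r^3\A$ (with $r = R_1$, $a = A$) consist of block-diagonal matrices. Since $A$ is block-antidiagonal by Equ.~(\ref{eq:represent4_A}) and $R_1$ is block-antidiagonal by Equ.~(\ref{eq:repr_R1}), the parity of the block type is governed by the exponent-sum parity modulo $2$; I expect exactly four of the eight cosets to land in the block-diagonal subgroup, giving $[G(\ell):N_{G(\ell)}(H)] = 2$ as claimed.

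\textbf{The Weyl group.} For the final and hardest part, I would compute the action of $W(H) = N_{G(\ell)}(H)/H$ on $\Fix{H} = \R^4$ and match it to the action of $G_3(\tau)$ on $\R^4$ from Section~\ref{sub:Groups-third-series}. The normalizer $N_{G(\ell)}(H)$ is block-diagonal, so each of its elements restricts to a pair of $4\times 4$ actions on the two summands; restricting to the first summand gives a homomorphism $N_{G(\ell)}(H) \to \OO{4}$ whose kernel, I expect, is exactly $H$ (elements acting trivially on $\R^4\oplus 0$). The main obstacle will be establishing the isomorphism $W(H)\cong G_3(\tau)$ concretely: I would track the restrictions of the relevant block-diagonal generators (products such as $R_1^2$, $A^2$, and the specific coset representatives) to the first $\R^4$ and verify that they satisfy the biquaternionic relations defining $G_3(\tau) = \langle [e_\tau,1],[1,i],[j,1],[1,j]\rangle$, matching orders via the order-$2\tau$ structure of $R_3$ established in Lemma~\ref{lem:rel2}. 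Once the generators are matched, absolute irreducibility of $W(H)$ on $\Fix{H}$ follows from Theorem~2.2 of \cite{LM} applied to $G_3(\tau)$, and the isomorphism $W(H)\cong W(H')$ follows by symmetry since conjugation by $R_1$ (block-antidiagonal) interchanges the two summands and intertwines the two restricted actions.
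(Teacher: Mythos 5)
Your proposal follows essentially the same route as the paper's proof: the explicit computation $R_2^2=\mathrm{diag}(\1_4,-\1_4)$ giving the fixed point spaces and the splitting $\Fix{H}\oplus\Fix{H'}=\R^8$, the identification of $N_{G(\ell)}(H)=N_{G(\ell)}(H')$ with the block-diagonal elements (index $2$ via the block-parity homomorphism), and the identification of $W(H)$ by restricting normalizer elements to $\Fix{H}$ and matching them with the biquaternionic generators of $G_3(\tau)$. The one step you leave as a plan is the step the paper must work hardest for: the ``lengthy, but explicit computation'' exhibiting $\Omega_1=\Xi_2\Xi_3^{q_1(\tau)}$, $\Omega_2=\Xi_3^{q_2(\tau)}$, $\Omega_3=\Xi_1\Xi_2\Xi_3^{q_3(\tau)}$ (with $\Xi_1=AR_1$, $\Xi_2=R_1^2$, $\Xi_3=A^2$) whose upper-left blocks are exactly the generator matrices $M_j$ of $G_3(\tau)$ --- this is also what justifies your ``expected'' claim that the kernel of the restriction map is exactly $H$, since only after the image is known to contain $G_3(\tau)$ does the order count $|N_{G(\ell)}(H)|=8k=2\cdot 16\tau$ force kernel $=H$ and image $=G_3(\tau)$.
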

\begin{svmultproof}
Using Equations (\ref{equ:gen_fix_space_1},\ref{equ:gen_fix_space_2}) we represent $\pm R_2^2$ as $R_1A^2R_1^3A^{2}$ and $R_1A^2R_1^3A^{2+\tau}$ respectively.  
From this it is easy
to see that the group generated by $R_{1}^{2},\ A^{2}$ and $R_{1}A$
commutes with $R_{2}^{2}$ and with $-R_{2}^{2}$. We could derive the properties of the elements $\pm R_2^2$ and the commutation
relation from the abstract setting of Theorem C (Abstract Generation Theorem), however the resulting computation is
slightly more involved, than making use of the matrices, compare Lemma \ref{lem:normalizer_abstract}. Note the form of $R_{2}^{2}$:
\[
R_{2}^{2}=\left(\begin{array}{cc}
\1_4 & 0_4 \\
0_4 & -\1_4\\
\end{array}\right).
\]
So on the matrix level it is obvious that any $4\times4$-block-diagonal
matrix commutes with $R_{2}^{2}$ and with $-R_{2}^{2}.$ From the
form of $R_{2}^{2}$ we get also immediately the statements on the
fixed point subspaces. In order to prove the statements on the action
of the Weyl group we have to look a bit closer. According to Equation
$\left(\ref{equ:G3_generation}\right)$ the group $G_{3}(m)$ is generated
by four elements, in the biquaternionic notation these are $[e_{m},1],\ [1,i],\ [1,j]$
and $[j,1]$ for $m=3,\ ,5,\dots$. In \cite{LM} it is shown, that
we can replace the first two elements by their product and we can
exchange the term $[j,1]$ by $[j,j]$. So we have the generators
\begin{equation}
\theta_{1}=[e_{m},i],\ \theta_{2}=[1,j],\ \theta_{3}=[j,j],\ m\in2\N+1.\label{eq:G3generation_short}
\end{equation}
For $j=1,2,3$ we define $4\times4$-matrices $M_{j}$ by
\[
M_{j}\left(\begin{array}{c}
v_{1}\\
v_{2}\\
v_{3}\\
v_{4}
\end{array}\right)=\theta_{j}\left(v_{1}+iv_{2}+jv_{3}+kv_{4}\right).
\]
From the fact that all $4\times 4$-block diagonal matrices commute with $\pm R_2^2$ 
and $R^1R_2^2R_1\inv =-R_2^2$ we conclude that the normalizer
 $N_{G(\ell)}\left(H\right)$ is given  
precisely by the set of  block-diagonal
matrices  with $4\times4$-blocks on the diagonal. 
Now we determine $\Omega_{j}\in N_{G(\ell)}\left(H\right)$ with 
\begin{equation}
\Omega_{j}=\left(\begin{array}{cc}
U_{j,1} & 0_{4}\\
0_{4} & U_{j,2}
\end{array}\right),\mbox{ with }U_{j,1}=M_{j}\mbox{ for }j=1,2,3.\label{eq:represent_determination}
\end{equation}
Observe, that $\Omega_{j}$ acts on $\Fix{H}$ as $\theta_{j}$, so
if we solve Equation $\left(\ref{eq:represent_determination}\right)$
it follows that the quotient $N_{G(\ell)}(H)/H$ is isomorphic to
$G_{3}(\tau)$, where $k=4\tau$ and the action is given by the one
which we have described in \cite{LM}. We write the generators of
$N_{G(\ell)}(H)$ as $\Xi_{j},\ j=1,2,3$ with
\[
\Xi_{1}=AR_{1},\ \Xi_{2}=R_{1}^{2},\ \Xi_{3}=A^{2}.
\]
 From the decomposition of $G(\ell)$ into cosets, as it is abstractly
described in Theorem C it follows
(using the relations) that any element $g\in N_{G}(H)$ has the (unique)
form
\[
g=\Xi_{1}^{\nu_{1}(g)}\Xi_{2}^{\nu_{2}(g)}\Xi_{3}^{q(g)},\mbox{ with }\nu_{1}(g),\ \nu_{2}(g)\in\lbrace0,1\rbrace,\ q(g)\in\lbrace2,4,\dots,\ell\rbrace.
\]
By a lengthy, but explicit computation, we can prove, that Equ.
$\left(\ref{eq:represent_determination}\right)$ has the solutions
(which depend on $\tau$)
\[
\Omega_{1}=\Xi_{2}\Xi_{3}^{q_{1}\left(\tau\right)},\ \Omega_{2}=\Xi_{3}^{q_{2}(\tau)},\ \Omega_{3}=\Xi_{1}\Xi_{2}\Xi_{3}^{q_{3}(\tau)}
\]
with
\[
q_{1}\left(\tau\right)=\begin{cases}
2\tau+2 & ,\mbox{ if }\tau\equiv3\mod4\\
8\tau-16 & ,\mbox{ if }\tau\equiv1\mod4,
\end{cases}\]
\[
q_{2}\left(\tau\right)=\begin{cases}
3\tau & ,\mbox{ if }\tau\equiv3\mod4\\
\tau & ,\mbox{ if }\tau\equiv1\mod4
\end{cases}
\]
 and
\[
q_{3}(\tau)=\frac{c}{2}\tau+\frac{1}{2},
\]
where
\[
c=\begin{cases}
5 & ,\mbox{ if }\tau\equiv1\mod8\\
3 & ,\mbox{ if }\tau\equiv3\mod8\\
1 & ,\mbox{ if }\tau\equiv5\mod8\\
7 & ,\mbox{ if }\tau\equiv7\mod8
\end{cases}
\]
\end{svmultproof}
\begin{proof}[of Theorem B]
Let $H,\ H'$ be as in the previous lemma.
\begin{enumerate}
\item Follows from the abstract generation theorem (Theorem C). We see from the lemmas \ref{lem:rel1} to \ref{lem:gens}  that the generators $R,\ A$ satisfy the relations required for Theorem C. Moreover Lemma \ref{lem:no_commut} implies that we are in the case with eight cosets of the cyclic group generated by $A$ and therefore we  $G(\ell)$ has $16k=64+128\ell$ elements.  
\item If $T:\R^{8}\to\R^{8}$ is a linear map which commutes with $G(\ell),$
then $T:\Fix{H}\to\Fix{H}$ and $T:\Fix{H'}\to\Fix{H'}$ and commutes
with the normalizers and hence also with the respective Weyl groups.
Since the Weyl groups act absolutely irreducibly on these fixed point
spaces, the restriction of $T$ to any of the fixed point spaces is
a multiple of the identity. So in general $T=c_{1}\1_{\Fix{H}}+c_{2}\1_{\Fix{H'}}$.
Now, $T$ has to commute with $R_{1}$, so we have
\[
R_{1}T=\left(\begin{array}{cc}
0_{4} & \F\\
\F & 0_{4}
\end{array}\right)\left(\begin{array}{cc}
c_{1}\1_{4} & 0_{4}\\
0_{4} & c_{2}\1_{4}
\end{array}\right)=\left(\begin{array}{cc}
0_{4} & c_{1}\F\\
c_{2}\F & 0_{4}
\end{array}\right)=\left(\begin{array}{cc}
0_{4} & c_{2}\F\\
c_{1}\F & 0_{4}
\end{array}\right)=TR_{1}.
\]
Therefore we have 
\[
c_{1}=c_{2}
\]
and
\[
T=c\1_{8}.
\]
This implies absolute irreducibility.
\item If $k_{1}|k_{2}$ then the groups generated by $A(k_{1})$ and $A(k_{2})$
are cyclic and clearly the first one is a subgroup of the second one.
Since we have $G(\ell)=\langle R_{1},A\rangle$ the inclusion for
the groups comes from the inclusion of the cyclic groups $\langle A(k_{j})\rangle,\: j=1,2$.
Then if $\ell_1,\ell_2\in\N$, are given we look at $k_j=4+8\ell_j=4\tau_j$, where $\tau_j$ $j=1,2$ are odd. 
Clearly we have for $k=4\tau_1\tau_2$ that $k=4+8\ell$, where $2\ell+1=\tau_1\tau_2$ 
\[G(\ell_j)\subset G(\ell), \mbox{ for } j=1,2.\] 
This implies that the union of $G(\ell),$ $\ell\in\N$ is a group and therefore its closure is a compact Lie group.
\item If $K<G\left(\ell\right)$ is an isotropy subgroup then we distinguish
several cases. 
\begin{enumerate}
\item $H\subset K$ and $H'\subset K$
\item One of these groups is contained in $K$, the other one is not included in $K$.
\item None of the group $H,\ H'$ is contained in $K$.
\end{enumerate}
We look at these cases one by one.
\begin{enumerate}
\item If both groups $H$ and $H$' are contained in $K$, then 
\[
\Fix{K}\subset\Fix{H}\cap\Fix{H'}=\lbrace0\rbrace.
\]
\item If just one of the groups $H,\ H'$ is contained in $K$, w.l.o.g.
say $H\subset K$, then 
\[
\Fix{K}\subset\Fix{H}.
\]
Let $\pi:N_{G(\ell)}(H)\to W_{G(\ell)}(H)$ be the canonical projection and let
$K'=K\cap N_{G(\ell)}(H)$. Then 
\[
\Fix{K}\subset\Fix{\pi(K')}\subset\Fix{H}
\]
and the spaces on the right hand side are even dimensional, since
$G_{3}(\tau)$ in its irreducible four dimensional representation
has only even dimensional fixed point spaces. If $K'=K$ then $\Fix{K}=\Fix{\pi(K)}$
is even dimensional. Assume $K\cap(G(\ell)\setminus N_{G(\ell)}(H))\not=\emptyset$.
Let $V\in K\cap(G\setminus N_{G}(H))$. By inspection we see that
$V$ has block antidiagonal form as
\[
V=\left(\begin{array}{cc}
0_{4} & V_{1}\\
V_{2} & 0_{4}
\end{array}\right),
\]
 where $V_{1,2}$ are linear maps $\Fix{H}\to\Fix{H'}$ and vice versa.
Therefore $V$ cannot fix an element in $\Fix{H}$. It follows that
$\dim(\Fix{K})$ is even. 
\item In this case the fixed point space $\Fix{K}$ has nontrivial projections
into $\Fix{H}$ and into $\Fix{H'}$ and $\Fix{H}\cap\Fix{K}=\lbrace0\rbrace=\Fix{H'}\cap\Fix{K}$.
Let $x\in\Fix{K}$. Since for $L\in N_{G}\left(H\right)$ we have
$L:\Fix{H}\to\Fix{H}$ and $L:\Fix{H'}\to\Fix{H'}$ it follows that
$L$ fixes the projections $\rho(x),\ \rho'(x)$, where 
\[
\rho:\R^{8}\to\Fix{H},\ \ker\rho=\Fix{H'}
\]
and
\[
\rho':\R^{8}\to\Fix{H'},\ \ker\rho'=\Fix{H}
\]
are projection operators. By the previous consideration $L$ has an
even dimensional fixed point space of the form $F\oplus F'$ where
$F\subset\Fix{H}$ and $F'\subset\Fix{H'}$. Therefore elements in
$N_{G(\ell)}\left(H\right)$ contribute even dimensional fixed point spaces.
Since
\[
\Fix{K}=\bigcap_{L\in K}\Fix{L}
\]
we can only get odd dimensional fixed point spaces if $K$ contains
elements outside this normalizer. Let $V\in K$, $V\notin N_{G}\left(H\right)$.
Then $V$ has the form as described before, i.e.
\[
V=\left(\begin{array}{cc}
0_{4} & V_{1}\\
V_{2} & 0_{4}
\end{array}\right).
\]
 Then $x\in\R^{8}$ is fixed under $V$, if and only if $V_{1}(\rho'(x))=\rho(x)$
and $V_{2}(\rho(x))=\rho'(x).$ This implies
\[
V^{2}(\rho(x))=\rho(x)
\]
 and 
\[
V^{2}(\rho'(x))=\rho'(x).
\]
 But then $V^{2}\in N_{G}\left(H\right)$ and $Q=\Fix{V^{2}}\cap\Fix{H}$
is even dimensional. Then
\[
\Fix{V}=\Meng{(q,V(q)}{q\in Q}
\]
is even dimensional. Therefore $\dim(\Fix{K})$ is even.
\end{enumerate}
\end{enumerate}
\end{proof}
\begin{remark}{Remark}
  It is instructive and useful to compare these proofs with computations using GAP \cite{GAP}. For small values of $\ell$ the GAP-names of these groups are 
\[G(1)=[192,36],\;G(2)=[320,35],\;G(3)=[448,34],\mbox{ and }G(4)=[576,37].\]
\end{remark}
\section{Proof of Theorem C}\label{sec:pf_thm_C}
\begin{proof}
Let $\mathcal{A}=\langle a\rangle$ denote the cyclic subgroup generated
by $a$, its order is $2k$. If we show that $G$ is given by the
seven cosets, then its order is obviously $16k$. Let us denote the
cosets by $C_{j},\ j=0,\dots,7$ with
\[
C_{0}=\mathcal{A},\ C_{1}=r\mathcal{A},\ C_{2}=r^{2}\mathcal{A},\ C_{3}=r^{3}\mathcal{A},\ C_{4}=ar^{2}\mathcal{A},\ C_{5}=ar^{3}\mathcal{A},\ C_{6}=a^{2}r^{3}\mathcal{A},\ \mbox{{and}}
\]
\[
C_{7}=ra^{2}r^{3}\mathcal{A}.
\]
 Let us go through the list of assertions. 
\begin{enumerate}
\item We have 
\[
r^{8}=\left(r^{4}\right)^{2}=\left(a^{k}\right)^{2}=a^{2k}=e.
\]

\item If $\left(ra\right)^{2}=e,$ then using the first hypothesis we have
\begin{equation}
r^{3}a^{k-1}=r^{3}ea^{k-1}=r^{3}raraa^{k-1}=r^{4}(ar)a^{k}=ar,\label{eq:rel1}
\end{equation}

\item and similarly 
\begin{equation}
ra^{k-3}=rea^{k-3}=rr^{3}a^{3}r^{3}a^{3}a^{k-3}=r^{4}a^{3}r^{3}a^{k}=a^{3}r^{3}.\label{eq:rel3}
\end{equation}

\item Again in a very similar way we find
\begin{equation}
r^{2}a^{2}=r^{2}r^{2}a^{2}r^{2}=a^{k+2}r^{2}\label{eq:rel2}
\end{equation}

\item and
\[
r^{2}a^{4}=r^{2}a^{2}a^{2}=a^{k+2}r^{2}a^{2}=a^{k+2}a^{k+2}r^{2}=a^{4}r^{2}.
\]

\item For the additional relation we see
\begin{equation}
a^{4}r=a^{3}(ar)=a^{3}r^{3}a^{k-1}=ra^{2k-4}.\label{eq:induction}
\end{equation}

\item The last statement in our list follows immediately from the fourth
statement.
\end{enumerate}
Since $a^{2k-4}=\left(a^{4}\right)^{-1}$ but $a^{4}\not=a^{2k-4}$
(since $k>4$) $G$ is not abelian. Especially $ar\not=ra$. 

Next we observe that $C_{0},$ $C_{1},\ C_{2},\ C_{3}$ are disjoint
cosets of $\mathcal{A}.$ Assume two of these cosets were equal. Then
we have an equation
\[
r^{q}=r^{p}a^{s}\mbox{ for }p,\ q\in{0,1,2,3},\ s\in{1,\dots,2k-1},p\not=q.
\]
We have either $p>q$ or $p<q.$ We look at the first case (the second
is similar), then $e=r^{p-q}a^{s}.$ $0<p-q\le3$ implies $r^{p-q}\notin\mathcal{A}$
but $r^{p-q}=(a^{s})^{-1}\in\mathcal{A}$.

Write $G=\langle a,r\rangle$ for the group generated by the elements
$r$, $a$ in $R$. Obviously we have $8\,\big|\,|G|$ and $2k\,\big|\,|G|$.
Since the cosets $C_{0}$, $C_{1}$, $C_{2}$, $C_{3}$ are disjoint
the group $G$ has at least $8k$ elements. 

Observe first that $ar\mathcal{A}=r^{3}a^{k-1}\mathcal{A}=r^{3}A=C_{3}$
does not give an additional coset. 

If $ar^{2}=r^{2}a^{s}$ for some $s\ge0$ then the coset defined by
$ar^{2}$ is the same one as the one defined by $r^{2}.$ In a similar
way we have $a^{2}r^{3}=ar^{3}a^{k-1}r^{2}=ar^{3}ar^{2}a^{(k+2)(k-2)/2}=ar^{3}r^{2}a^{s}a^{k^{2}/2-1}=\in ar\mathcal{A}=r^{3}\mathcal{A}$.
Therefore the coset of $a^{2}r^{3}$ is the same as the one of $r^{3}$
and $ra^{2}r^{3}\in\mathcal{A}.$ The last coset to be looked at is
the one defined by $ar^{3}=r^{3}a^{k-1}r^{2}=r^{5}a^{q}$ for some
$q\in\N$, which is the coset of $r$.

In the second case the coset of $ar^{2}$ is different from those
defined before and we have a new coset. The same applies to the cosets
of $ar^{3}$, $a^{2}r^{3}$ and $ra^{2}r^{3}$. 

From now on we treat the second case only since, by Lemma \ref{lem:no_commut},
this is the one which appears in $G(\ell)$. 

Let $w=w(r,a)$ denote any word in $r,a$, we first want to show that
\begin{equation}
w\mathcal{A}\subset\bigcup_{j=0}^{7}C_{j}.\label{eq:union}
\end{equation}
 W.l.o.g. $w(r,a)$ ends with a power of $r$, otherwise we can rewrite
it by putting the $a$-power into $\mathcal{A}$. Since $r^{4}\in\mathcal{A}$
the word can be shortened such that the end is a power $r^{s},\ s\in\lbrace1,2,3\rbrace$
with exponent at most three. Moreover the word consists of a product
of any powers of $a$ and powers of $r^{s}\ s\in\lbrace1,2,3\rbrace$.
Let $m=m(w)$ denote the number of powers of $r$ appearing in the
product describing the word $w$. We prove by induction on $m\in\N$
that Equation (\ref{eq:union}) is satisfied.

We begin with $m=1$, i.e. $w(r,a)=a^{j}r^{s}$, where $j\in\N$ and
$s\in{1,2,3}$. Again we prove this by (a somewhat unusual) induction
on $j\in\N$ and for $s=1$ we have already seen the first step. We
consider the cases for the different $s\in\lbrace1,2,3\rbrace$ separately.
Let us first do some simple cases which will be needed for the induction,
where we use the relations $\left(\ref{eq:rel1},\ref{eq:rel3}\right)$:

\[
a^{2}r=ar^{3}a^{k-1}\in C_{5}
\]

\[
a^{3}r=a^{2}r^{3}a^{k-1}\in C_{6}
\]
and again with $\left(\ref{eq:rel3}\right)$
\[
a^{4}r=a^{3}r^{3}a^{k-1}=ra^{2k-4}\in C_{1}.
\]

Similar computations show that $a^{j}r^{3}\in C_{1}\cup\ C_{3}\cup\ C_{5}\cup\ C_{6}$
for $j=1,2,3$. Given $j\in\N,$$j\ge4$. Let $j_{0}\in\N$ be maximal
with $4j_{0}\le j$. Then $j=4j_{0}+q$, where $\N\ni0\le q\le3$.
Then 
\[
a^{j}r=a^{4j_{0}+q}r=a^{q}(a^{4})^{j_{0}}r=a^{q}r\left(a^{2k-4}\right)^{j_{0}}\in a^{q}r\mathcal{A}\subset C_{1}\cup C_{3}\cup C_{5}\cup C_{6}.
\]
A similar computation applies to $s=3.$ If $s=2$ we have for odd
$2j+1\in\N$
\[
aa^{2j}r^{2}=ar^{2}a^{2j}\in C_{4}.
\]
The case $s=2$, $2j\in\N$ is still simpler. This finishes the case
$m=1$. 

In the general case $m\in\N$ we look at the word $w=a^{s_{1}}r^{j_{1}}\dots a^{s_{m}}r^{j_{m}}$.
W.l.o.g. we assume that $j_{r}\le3,\ r=1,\dots,m$, since $r^{4}\in\mathcal{A}.$
We rewrite $w$ in the form 
\[
w=w_{1}r^{j_{m-1}}a^{s_{m}}r^{j_{m}},
\]
where $w_{1}$ is the first part of the word. Again it is no loss
of generality to assume that $0\le s_{m}\le3$, since the induction
relation (\ref{eq:induction}) allows the following reduction step
for $s_{m}\ge4$
\[
r^{j_{m-1}}a^{s_{m}}r^{j_{m}}=r^{j_{m-1}}a^{s_{m}-4}r^{j_{m}}a^{j_{m}(2k-4)}\in r^{j_{m-1}}a^{s_{m}-4}r^{j_{m}}\mathcal{A}.
\]
 This leads us to study the following $27$ cases: $1\le j_{m-1},\ j_{m}\le3$,
$1\le s_{m}\le3$. The Table \ref{tab:cosets} gives the results:

\begin{table}[H]
\begin{onehalfspace}
\begin{centering}
$\begin{array}{|c|c|c|c||c|c|c|c|}
\hline j_{m-1} & j_{m} & s_{m} & \mbox{coset} & j_{m-1} & j_{m} & s_{m} & \mbox{coset}\\
\hline\hline 1 & 1 & 1 & \mathcal{A} & 2 & 2 & 3 & ra^{2}r^{3}\mathcal{A}\\
\hline 1 & 1 & 2 & ar^{2}\mathcal{A} & 2 & 3 & 1 & a^{2}r^{3}\mathcal{A}\\
\hline 1 & 1 & 3 & ra^{2}r^{3}\mathcal{A} & 2 & 3 & 2 & ar^{2}\mathcal{A}\\
\hline 1 & 2 & 1 & a^{2}r^{3}\mathcal{A} & 2 & 3 & 3 & r^{3}\mathcal{A}\\
\hline 1 & 2 & 2 & r^{3}\mathcal{A} & 3 & 1 & 1 & r^{2}\mathcal{A}\\
\hline 1 & 2 & 3 & a^{2}r^{3}\mathcal{A} & 3 & 1 & 2 & ra^{2}r^{3}\mathcal{A}\\
\hline 1 & 3 & 1 & ar^{2}\mathcal{A} & 3 & 1 & 3 & ar^{2}\mathcal{A}\\
\hline 1 & 3 & 2 & ra^{2}r^{3}\mathcal{A} & 3 & 2 & 1 & ar^{3}\mathcal{A}\\
\hline 1 & 3 & 3 & r^{2}\mathcal{A} & 3 & 2 & 2 & r\mathcal{A}\\
\hline 2 & 1 & 1 & r\mathcal{A} & 3 & 2 & 3 & ar^{3}\mathcal{A}\\
\hline 2 & 1 & 2 & a^{2}r^{3}\mathcal{A} & 3 & 3 & 1 & ra^{2}r^{3}\mathcal{A}\\
\hline 2 & 1 & 3 & ar^{2}\mathcal{A} & 3 & 3 & 2 & ar^{2}\mathcal{A}\\
\hline 2 & 2 & 1 & ra^{2}r^{3}\mathcal{A} & 3 & 3 & 3 & \mathcal{A}\\
\hline 2 & 2 & 2 & \mathcal{A} & - & - & - & -
\\\hline \end{array}$
\par\end{centering}
\end{onehalfspace}

\centering{}\caption{\label{tab:cosets}Cosets and the last part of the word.}
\end{table}

From the table we see that in most cases we reduce the number $m$
by at least one. The exception are the six cases cases where we get
the coset $ra^{2}r^{3}\mathcal{A}$. After doing this reduction we
look at the part of the word which has the form
\[
r^{j_{m-2}}a^{s_{m-1}}ra^{2}r^{3}a^{\sigma}
\]
where $j_{m-2},\ s_{m-1}\in\lbrace1,2,3\rbrace$ and $\sigma\in\N$.
Again we display the results in a table, see Table \ref{tab:cosets_complic}.

\begin{table}[H]
\begin{centering}
\begin{tabular}{|c|c|c|}
\hline 
$j_{m-2}$ & $s_{m-1}$ & coset\tabularnewline
\hline 
\hline 
$1$ & $1$ & $r^{3}\mathcal{A}$\tabularnewline
\hline 
$1$ & $2$ & $a^{2}r^{3}\mathcal{A}$\tabularnewline
\hline 
$1$ & $3$ & $r^{3}\mathcal{A}$\tabularnewline
\hline 
$2$ & $1$ & $\mathcal{A}$\tabularnewline
\hline 
$2$ & $2$ & $ra^{2}r^{3}\mathcal{A}$\tabularnewline
\hline 
$2$ & $3$ & $\mathcal{A}$\tabularnewline
\hline 
$3$ & $1$ & $r\mathcal{A}$\tabularnewline
\hline 
$3$ & $2$ & $ar^{3}\mathcal{A}$\tabularnewline
\hline 
$3$ & $3$ & $r\mathcal{A}$\tabularnewline
\hline 
\end{tabular}
\par\end{centering}

\caption{\label{tab:cosets_complic}In this table we look a the words where
the first step did not lead to a simplification.}
\end{table}

So, in each case we have reduced the number factors which are a power
of $r$, by at least one, and the remaining word has the same structure,
so by induction each word can be transformed using the relations into
a member of one the eight cosets. Therefore we have established the
theorem. \end{proof}
\begin{lemma}\label{lem:normalizer_abstract}
We look at $h=ra^{2}r^{3}a^{2}$ and $h'=ra^{2}r^{3}a^{2+k}$. Then
$\langle h\rangle=H\subset G$ as well as $\langle h'\rangle=H'\subset G$
is a subgroup, whose respective normalizer $N_{G}(H)=$$N_{G}(H')\subset G$
has index $2$ in $G$. 
\end{lemma}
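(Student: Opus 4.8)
The plan is to reduce the whole statement to the single observation that $H$ and $H'$ have order two, after which the normalizer coincides with the centralizer and the index can be read off from the size of one conjugacy class.

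First I would dispose of the first two assertions. Using $r^{4}=a^{k}$ and the centrality of $a^{k}$ (with $(a^{k})^{2}=a^{2k}=e$) one rewrites $r^{3}=a^{k}r^{-1}$ and pulls the central factor out of
\[
h=ra^{2}r^{3}a^{2}=a^{k}\,(ra^{2}r^{-1})\,a^{2},
\]
and a short reduction with the relation $r^{2}a^{2}=a^{k+2}r^{2}$ of Theorem C then gives $h^{2}=e$. On the other hand
\[
h'=ra^{2}r^{3}a^{2+k}=(ra^{2}r^{3}a^{2})\,a^{k}=a^{k}h,
\]
since $a^{k}$ is central, so $(h')^{2}=a^{2k}h^{2}=e$ as well. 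Thus $H=\{e,h\}$ and $H'=\{e,h'\}$ are subgroups of order two. For a subgroup of order two the normalizer is exactly the centralizer of its nontrivial element, so $N_{G}(H)=C_{G}(h)$ and $N_{G}(H')=C_{G}(h')$; and because $a^{k}$ is central, an element commutes with $h'=a^{k}h$ if and only if it commutes with $h$. Hence $C_{G}(h')=C_{G}(h)$ and $N_{G}(H)=N_{G}(H')$.

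For the index I would compute the conjugacy class of $h$ and apply the orbit--stabilizer relation $[G:C_{G}(h)]=|\{ghg^{-1}:g\in G\}|$. The claim is that this class is precisely $\{h,h'\}$. The two ingredients are
\[
rhr^{-1}=h',\qquad aha^{-1}=h'.
\]
Granting these, and using $h'=a^{k}h$ with $a^{k}$ central of order two, one gets $rh'r^{-1}=a^{k}(rhr^{-1})=a^{k}h'=a^{2k}h=h$ and likewise $ah'a^{-1}=h$; hence conjugation by $r^{\pm1}$ and $a^{\pm1}$ maps the two--element set $\{h,h'\}$ onto itself, and since $G=\langle r,a\rangle$ the whole class lies in $\{h,h'\}$. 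Conversely $h'=a^{k}h\neq h$ because $a^{k}\neq e$ (as $a$ has order $2k$ and $0<k<2k$), so the class has exactly two elements. Consequently $[G:C_{G}(h)]=2$, and combining with the previous paragraph, $N_{G}(H)=N_{G}(H')=C_{G}(h)$ is a subgroup of index two in $G$.

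The step I expect to be the main obstacle is the purely relational verification of the three identities $h^{2}=e$, $rhr^{-1}=h'$ and $aha^{-1}=h'$ from the seven relations of Theorem C. The difficulty is that $h$ is assembled from $a^{2}$ and the odd powers $r,r^{3}$, whereas Theorem C controls conjugation cleanly only for even powers (through $r^{2}a^{2}=a^{k+2}r^{2}$ and $r^{2}a^{4}=a^{4}r^{2}$); a single $r$ inevitably mixes $r$ and $r^{-1}$. The device I would lean on is the identity $rar=a^{-1}$ (equivalently $(ra)^{2}=e$, with companion $ar=r^{-1}a^{-1}$) together with $r^{3}=a^{k}r^{-1}$: these let one move every isolated power of $r$ to one side and then collapse the remaining even $a$-powers with $r^{2}a^{2}=a^{k+2}r^{2}$, all the while tracking the central factor $a^{k}$. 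This is exactly the computation that the remark following Lemma~\ref{lem:normalizer_matrices} describes as \emph{slightly more involved} than the matrix argument; it is entirely mechanical, but the bookkeeping of the central element $a^{k}$ and of the odd $a$-exponents is where care is required. Once the three identities are in hand, the lemma follows as outlined.
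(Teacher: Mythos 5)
Your proposal is correct, but it takes a genuinely different route from the paper's own proof. The paper argues bottom-up: it introduces the explicit subgroup $K=\langle r^{2},\,ar,\,a^{2}\rangle$, asserts (the computation is only described as tedious, not carried out) that these generators commute with $h$ and with $h'$, and then identifies $K$ as the union of eight sets of the form $w\mathcal{A}_{e}$ resp.\ $w\mathcal{A}_{o}$ (even resp.\ odd powers of $a$), each of size $k$, so that $|K|=8k$ and $[G:K]=2$. You argue top-down: $h$ and $h'=a^{k}h$ are involutions, so normalizer equals centralizer; centrality of $a^{k}$ gives $C_{G}(h)=C_{G}(h')$ at once; and the orbit--stabilizer count for the conjugacy class $\{h,h'\}$ gives the index. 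Your route closes two small gaps the paper leaves implicit: as written, the paper only shows $K\subseteq N_{G}(H)$, hence index $1$ or $2$, and one still needs precisely your identity $rhr^{-1}=h'\neq h$ to rule out that $H$ is normal; likewise the equality $N_{G}(H)=N_{G}(H')$ is never addressed there, while for you it is immediate. What the paper's route buys instead is an explicit generating set and coset decomposition of the normalizer, which is what the matrix-level Lemma~\ref{lem:normalizer_matrices} exploits to identify the Weyl group with $G_{3}(\tau)$. Both proofs defer the mechanical relation-chasing, but your three deferred identities do follow from the relations of Theorem~C in a few lines each; for instance
\[
h^{2}=ra^{2}r^{3}\,(a^{2}ra^{2})\,r^{3}a^{2}=ra^{2}r^{3}\,(ar^{-1}a)\,r^{3}a^{2}=\dots=r^{2}a^{k}r^{2}=a^{k}r^{4}=e,
\]
using $ar=r^{-1}a^{-1}$, $ra=a^{-1}r^{-1}$, $r^{2}a^{2}=a^{k+2}r^{2}$ and $r^{-2}=a^{k}r^{2}$; similarly $rhr^{-1}=h'$ reduces to the fact that $a^{2}$ commutes with $r^{2}a^{-1}r^{2}$, and $aha^{-1}=h'$ reduces to $a^{-1}r^{3}=ra^{k+1}$, which is immediate from $(ra)^{2}=e$ and $r^{4}=a^{k}$. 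One minor imprecision: in your first paragraph you suggest $h^{2}=e$ follows from $r^{2}a^{2}=a^{k+2}r^{2}$ alone after extracting the central factor; the involutive relations $(ar)^{2}=(ra)^{2}=e$ are indispensable for moving $a^{2}$ past the odd powers of $r$, as your closing paragraph correctly anticipates.
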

\begin{proof}
Let $\mathcal{A}_{e}\subset\mathcal{A}$ be the set of even powers
of $a$ and $\mathcal{A}_{o}$ be the set of odd powers of $a.$ Consider
the subgroup $K=\langle r^{2},\ ar,\ a^{2}\rangle$ of $G.$ It is
a bit tedious to show that each of the generators commutes with $h$
or $h'$ respectively (in fact the computations are much more involved
than the corresponding computations in the case of the matrices).
We claim that $K$ is explicitly given by 
\[
K=\mathcal{A}_{e}\cup r\mathcal{A}_{o}\cup r^{2}\mathcal{A}_{e}\cup r^{3}\mathcal{A}_{o}\cup ar^{2}\mathcal{A}_{o}\cup ar^{3}\mathcal{A}_{e}\cup a^{2}r^{3}\mathcal{A}_{o}\cup ra^{2}r^{3}\mathcal{A}_{e}.
\]
This proves the claim on the order of $K$.
\end{proof}
\section{Bifurcations for the new series\label{sec:Bifurcations8}}
First we look at the equivariant structure. The main point here is,
that we cannot explicitly compute equivariants and we can only compute
dimensions for homogeneous equivariant polynomial maps using character
theory and this is limited to small degrees and to the first few groups.
In order to obtain general statements we start with the groups $G_{3}(\tau)$.
In our context we have an action of this group as the Weyl group on the
fixed point spaces of $H$ and $H'$ respectively.  From the results
in Section \ref{sec:Bifurcations4} we can determine cubic $W(N_{G(\ell)}(H)/H)$
equivariant maps on these fixed point spaces. The elements in $G(\ell)\setminus N_{G(\ell)}(H)$
map those euivariants onto each other. In this way we can construct
cubic equivariant maps which restrict onto on the fixed point spaces.
Given a group $G$ acting on a real vector space $V$ we denote by
$C_{G}^{\infty}(V,V)$ the space of smooth $G$-equivariant maps $V\to V$,
by $\mathcal{E}_{G}(V,V)$ we denote the space of equivariant polynomial
mappings and by $\mathcal{E}_{G}^{\mu}(V,V)$ we denote the subspace
of $\mathcal{E}_{G}(V,V)$ of such maps which are homogeneous of degree
$\mu\in\N$. If $W\subset V$ is a fixed point subspace of a subgroup
$K\subset G$ we look at the restriction map 
\[
\Pi_{\mu}:\mathcal{E}_{G}^{\mu}(V,V)\to\mathcal{E}_{N_{G}(K)}(W,W):p\mapsto p_{\big|_{W}}.
\]
In general this map is neither injective nor surjective. However we
have the following lemma.
\begin{lemma}
\label{lem:surjective_restriction}For $K\in\lbrace H,H'\rbrace$
the mapping
\[
\Pi_{3}:\mathcal{E}_{G(\ell)}^{3}(\R^{8},\R^{8})\to\mathcal{E}_{N_{G(\ell)}(K)}^3(\Fix{K},\Fix{K})
\]
is surjective.\end{lemma}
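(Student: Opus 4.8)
The plan is to exploit the $G(\ell)$-invariant splitting $\R^{8}=\Fix{H}\oplus\Fix{H'}$ of Lemma \ref{lem:normalizer_matrices} together with the fact that $N:=N_{G(\ell)}(H)=N_{G(\ell)}(H')$ has index $2$ in $G(\ell)$, and to build the required extension by a transfer (averaging) over the two cosets of $G(\ell)/N$. It suffices to treat $K=H$: since $R_{1}R_{2}^{2}R_{1}^{-1}=-R_{2}^{2}$ we have $R_{1}HR_{1}^{-1}=H'$, so conjugation by $R_{1}$ identifies the two cases. The structural input I would isolate first is that every element of $N$ is block diagonal and hence preserves both $\Fix{H}$ and $\Fix{H'}$, while $R_{1}\in G(\ell)\setminus N$ is block antidiagonal and interchanges the two summands (so $R_{1}^{-1}\Fix{H}=\Fix{H'}$). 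Recall also that $H$ acts trivially on $\Fix{H}$, so that an $N$-equivariant cubic on $\Fix{H}$ is exactly a $W(H)\cong G_{3}(\tau)$-equivariant cubic, i.e. an element of the target space $\mathcal{E}^{3}_{N}(\Fix{H},\Fix{H})$.

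Given a target $q\in\mathcal{E}^{3}_{N_{G(\ell)}(H)}(\Fix{H},\Fix{H})$, I would proceed as follows. Let $\rho=\tfrac12(\1_{8}+R_{2}^{2})$ be the projection of $\R^{8}$ onto $\Fix{H}$ with kernel $\Fix{H'}$, and let $\iota\colon\Fix{H}\hookrightarrow\R^{8}$ be the inclusion; since $\rho$ is the spectral projection of $R_{2}^{2}$ it commutes with every block-diagonal element, hence with all of $N$. First extend $q$ to $\R^{8}$ by setting $\tilde q:=\iota\circ q\circ\rho$, which is homogeneous cubic on $\R^{8}$ and $N$-equivariant. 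Then define
\[
p(x)=\tilde q(x)+R_{1}\,\tilde q(R_{1}^{-1}x).
\]
The two summands are the transfer over the cosets $N$ and $R_{1}N$; because $\tilde q$ is $N$-equivariant the sum is independent of the chosen coset representatives, and the standard reindexing argument shows $p(hx)=h\,p(x)$ for all $h\in G(\ell)$, so $p\in\mathcal{E}^{3}_{G(\ell)}(\R^{8},\R^{8})$. To compute $\Pi_{3}(p)$ I restrict to $w\in\Fix{H}$: the identity coset gives $\tilde q(w)=\iota(q(\rho w))=\iota(q(w))$, while the $R_{1}$-coset vanishes, since $R_{1}^{-1}w\in\Fix{H'}=\ker\rho$ forces $\tilde q(R_{1}^{-1}w)=\iota(q(0))=0$ by homogeneity. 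Hence $p|_{\Fix{H}}=q$, proving surjectivity.

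The verifications are routine but I would present them carefully. The $N$-equivariance of $\tilde q$ is the step to watch: for $n\in N$ one uses that $\rho$ commutes with $n$ to write $\rho(nx)=n\,\rho(x)$, then the $W(H)$-equivariance of $q$ on $\Fix{H}$ gives $q(n\,\rho(x))=n\,q(\rho(x))$, and finally $\iota$ commutes with $n$ because $n$ preserves $\Fix{H}$. The main (and essentially only) conceptual obstacle is the cross term: one must know that coset representatives outside $N$ send $\Fix{H}$ into its complement $\Fix{H'}$, for this is exactly what makes $R_{1}\tilde q(R_{1}^{-1}\cdot)$ vanish on $\Fix{H}$ and lets the identity coset reproduce $q$ \emph{exactly} rather than up to a lower-rank correction; this is where the direct-sum decomposition $\Fix{H}\oplus\Fix{H'}=\R^{8}$ and the antidiagonal form of elements of $G(\ell)\setminus N$ are used in an essential way. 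The case $K=H'$ follows verbatim after conjugating by $R_{1}$, or simply by repeating the construction with $\rho$ replaced by $\tfrac12(\1_{8}-R_{2}^{2})$.
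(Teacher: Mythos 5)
Your proof is correct and follows essentially the same route as the paper: the paper's (very terse) proof simply invokes the construction sketched just before the lemma, namely extending a $W(H)$-equivariant cubic from $\Fix{H}$ using the splitting $\R^{8}=\Fix{H}\oplus\Fix{H'}$ and transferring by the elements of $G(\ell)\setminus N_{G(\ell)}(H)$. Your explicit formula $p(x)=\tilde q(x)+R_{1}\,\tilde q(R_{1}^{-1}x)$, together with the observation that the cross term vanishes on $\Fix{H}$ because $R_{1}^{-1}\Fix{H}=\Fix{H'}=\ker\rho$, is precisely that construction carried out in detail.
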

\begin{proof}
Follows directly from the above construction.
\end{proof}
With this lemma we can prove the main result.
\begin{proposition}
The representation of $G(\ell)$ on $\R^{8}$ is generically symmetry
breaking in the sense of {\rm \cite{Fd6}}. The set of nontrivial branches
includes steady states in one of the two dimensional fixed point spaces
in $\Fix{H}$.
\end{proposition}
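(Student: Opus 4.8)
The plan is to reduce the eight-dimensional bifurcation problem to the four-dimensional one already solved in Theorem~A, using Lemma~\ref{lem:surjective_restriction} as the bridge. First I would recall the general framework of Field \cite{Fd6}: to establish that a representation is generically symmetry breaking with steady states in a given isotropy type, it suffices to exhibit, for an open and dense set of cubic equivariants, a normally hyperbolic zero of the associated phase vector field inside the relevant fixed point subspace. The key structural fact we may invoke is that $\Fix{H}$ and $\Fix{H'}$ are each four-dimensional, and by Lemma~\ref{lem:normalizer_matrices} the Weyl group $W(H)=N_{G(\ell)}(H)/H$ acts on $\Fix{H}$ exactly as $G_3(\tau)$ acts on $\R^4$ in its absolutely irreducible representation.

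Next I would use the surjectivity in Lemma~\ref{lem:surjective_restriction}: every cubic $N_{G(\ell)}(H)$-equivariant map on $\Fix{H}$ arises as the restriction $\Pi_3(p)$ of some cubic $G(\ell)$-equivariant $p$ on $\R^8$. In particular, the two basis equivariants $e_{3,1},e_{3,2}$ from Equation~(\ref{eq:real gradients}) which generate the non-radial cubic equivariants for $G_3(\tau)$ are realized as restrictions of global cubic equivariants on $\R^8$. Therefore, for the family of cubic $G(\ell)$-equivariants whose restrictions to $\Fix{H}$ run through $a\,e_{3,1}+b\,e_{3,2}$, the phase vector field restricted to $\Fix{H}$ is precisely the field $a\,t_{3,1}+b\,t_{3,2}$ analyzed in Section~\ref{sec:Bifurcations4}. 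I would then single out one of the two-dimensional fixed point subspaces of an isotropy subgroup of $G_3(\tau)$ inside $\Fix{H}$ — say the image of $\Fix{H_2}$ under the identification with $\R^4$ — and recall from the computation preceding Theorem~A that on the circle in this subspace the zeros occur at $v_1=0$, $v_3=0$, and $v_1^2=v_3^2$, and that each is regular for generic $a$ (explicitly, away from $a=-1$).

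The regularity of these zeros, checked in Section~\ref{sec:Bifurcations4}, gives exactly the non-degeneracy condition $Df(x(\phi),y(\phi))(x(\phi),y(\phi))^T\neq 0$ that Field's criterion requires; since the relevant isotropy subgroup of $W(H)$ lifts to a genuine isotropy subgroup of $G(\ell)$ whose fixed point space in $\R^8$ is two-dimensional, these regular zeros produce branches of relative equilibria for $G(\ell)$ sitting in a two-dimensional fixed point space inside $\Fix{H}$, and by the generalized version of Field's results used for Theorem~A these branches contain steady states. Genericity in $(a,b)$ transfers from the four-dimensional problem because $\Pi_3$ is surjective, so an open dense condition on the restricted equivariant pulls back to an open dense condition upstairs.

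The step I expect to be the main obstacle is making the passage through $\Pi_3$ genuinely rigorous at the level of the phase vector field rather than merely at the level of equivariant maps. Surjectivity of $\Pi_3$ tells us the restricted cubic equivariants are attained, but one must verify that the \emph{radial projection} defining the phase field commutes with restriction to $\Fix{H}$ — i.e.\ that projecting onto the tangent bundle of the sphere in $\R^8$ and then restricting to $\Fix{H}$ agrees with the four-dimensional projection — and that normal hyperbolicity in the four-dimensional fixed point subspace implies normal hyperbolicity in $\R^8$ transverse to the group orbit. The former is immediate because $\Fix{H}$ is a linear subspace on which the radial vector restricts to the radial vector of the smaller sphere; the latter is where Field's machinery in the generalized form cited for Theorem~A is doing the real work, and I would lean on it rather than recompute the full eight-dimensional Jacobian. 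Subject to that, the argument closes.
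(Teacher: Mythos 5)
Your proposal is correct and follows essentially the same route as the paper: the paper's own (much terser) proof simply states that, by Lemma~\ref{lem:surjective_restriction}, the generic cubic $G(\ell)$-equivariant map on $\R^{8}$ restricts on $\Fix{H}$ (resp.\ $\Fix{H'}$) to the generic $W(H)$-equivariant cubic map, so the bifurcation behavior is that of Theorem~A. The points you flag as delicate --- transferring genericity through the surjection $\Pi_3$ and checking that the phase-field construction commutes with restriction to the linear subspace $\Fix{H}$ --- are exactly what the paper leaves implicit, so your write-up is a faithful (indeed more complete) version of the same argument.
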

\begin{proof}
The generic cubic map on $\R^8$ restricts to the fixed point space $\Fix{H}$, respectively to $\Fix{H'}$ to be the generic $W(H)$ (or $W(H')$)-equivariant cubic map. Therefore the bifurcation behavior is the same as the one described in Theorem A. 
\end{proof}
\section{Remarks on further  series in $\R^{4}$ and $\R^8$}\label{sec:Remarks}
In the introduction we have made some remarks on extending the three
families defined in \cite{LM} to include more groups. The series
defined there include groups of order $16m$, where $m$ is odd. If
we look at the GAP computations given \cite{LM}, Table 5 we see there are
many more groups which provide counter examples to the (AIC). Looking
at the group orders we have additional groups 
\begin{enumerate}
\item of orders $2^{k}m$, where $m$ is odd and $k>4$ and 
\item of the form $8m$, where $m$ is odd, square-free and non-prime.
\end{enumerate}
In the second case Table 5 from \cite{LM} gives precisely three groups.
For these it is relatively easy to show that there is a natural extension  of definition of the three families ${\mathcal{F}}_j$, $j=1,2,3$. We  can use the definition
of the generators as given \cite{LM} for the original families. In
the first case and in the case $k=4$ the situation is slightly more
complicated. If $m$ is square-free but non-prime in most cases there
are more than three groups which fail (AIC). In the case where the prime
factorization of $m$ contains squares the problem is split: for $144=2^{4}\cdot9$ $m=9$ 
is not square-free but there are more than three such groups of order $144$, for $400=2^{4}\cdot25$
there only three such groups which are in the families we have described.
For the four dimensional situation we have a few cases where we do
not have a complete picture of the relation of the failure of (AIC)
and Ize's conjecture, but in most cases where we have the failure
of (AIC) Ize's conjecture remains true. So it seems that the Ize conjecture
is true in the case of dimension $4$. \\[3mm]
In $\R^8$ we have numerical evidence for the existence of two series of groups of order $16m$, where $m$ is odd, non-prime, square-free and contains at least one prime factor which is of form $1\mod 4$ (i.e. $m=15$, $35$, $39$, $\dots$) where there are only four dimensional fixed point spaces. Bifurcations and dynamics in these cases are subject for further research.   
\begin{acknowledgements}
I would like to thank Haibo Ruan for helpful discussions.  I thank Y. Krasnov for spotting  a computational error in a draft version of this paper. 
I would also like to thank the unknown referee for several hints and useful remarks. 
\end{acknowledgements}
\bibliographystyle{ab1}
\addcontentsline{toc}{section}{\refname}\bibliography{bif_a,bif_b,bif_c,bifurcat,bif_d,bif_f,bif_g,bif_h,bif_k,bif_m,bif_v}

\end{document}